\documentclass[reqno,12pt]{amsart}

\usepackage{color}

\headheight=8pt     \topmargin=0pt
\textheight=632pt   \textwidth=432pt
\oddsidemargin=18pt \evensidemargin=18pt




\theoremstyle{plain}
\newtheorem{thm}{Theorem}[section]
\newtheorem{lemma}[thm]{Lemma} 
\newtheorem{prop}[thm]{Proposition}


\theoremstyle{remark}

\newtheorem{remark}[thm]{Remark}

\theoremstyle{definition}
\newtheorem{defi}[thm]{Definition}
\newtheorem{example}[thm]{Example}

\newtheorem{ques}[thm]{Question}

\usepackage{amscd,amssymb,euscript}
\usepackage{graphicx}
\usepackage{enumerate}
\usepackage[initials]{amsrefs}
\usepackage[all]{xy}
\usepackage{comment}

\newcount\theTime
\newcount\theHour
\newcount\theMinute
\newcount\theMinuteTens
\newcount\theScratch
\theTime=\number\time
\theHour=\theTime
\divide\theHour by 60
\theScratch=\theHour
\multiply\theScratch by 60
\theMinute=\theTime
\advance\theMinute by -\theScratch
\theMinuteTens=\theMinute
\divide\theMinuteTens by 10
\theScratch=\theMinuteTens
\multiply\theScratch by 10
\advance\theMinute by -\theScratch

\def\today{{\number\day\space
 \ifcase\month\or
  January\or February\or March\or April\or May\or June\or
  July\or August\or September\or October\or November\or December\fi
 \space\number\year}}


\newcommand\at{{\tilde a}}

\newcommand\Bc{{\mathcal{B}}}
\newcommand\Cpx{{\mathbb C}}
\newcommand\diag{\operatorname{diag}}

\newcommand\Fb{{\mathbb F}}
\newcommand\HEu{{\EuScript H}}                   
\renewcommand{\i}{\text{\rm i}}

\newcommand{\im}{\text{\rm Im}}
\newcommand\ImagPart{{\mathrm{Im}\;}}

\newcommand\lambdat{{\tilde\lambda}}

\newcommand\Mcal{{\mathcal{M}}}
\newcommand\Nats{{\mathbb N}}

\newcommand\Reals{{\mathbb R}}

\newcommand\RealPart{{\mathrm{Re}\;}}
\newcommand{\re}{\text{\rm Re}}

\newcommand\supp{\operatorname{supp}}

\newcommand\UT{\operatorname{UT}}
\newcommand\UTTM{\operatorname{UTTM}}
\newcommand\Vc{{\mathcal{V}}}
\newcommand\tr{{\mathrm{tr}}}
\newcommand\zt{{\tilde z}}

\begin{document}

\title[Upper triangular matrices]{Upper triangular Toeplitz matrices and real parts of quasinilpotent operators}

\author[Dykema]{Ken Dykema$^{1}$}
\address{K.D., Department of Mathematics, Texas A\&M University,
College Station, TX 77843-3368, USA}
\email{kdykema@math.tamu.edu}
\author[Fang]{Junsheng Fang$^{2}$}
\address{J.F.,  School of Mathematical Sciences,  Dalian University of Technology,
Dalian, China}
\email{junshengfang@gmail.com}
\author[Skripka]{Anna Skripka$^3$}
\address{A.S., Department of Mathematics and Statistics, MSC01 1115,
	University of New Mexico, Albuquerque, NM 87131, USA}
\email{skripka@math.unm.edu}
\thanks{\footnotesize ${}^1$Research supported in part by NSF grants DMS--0901220 and DMS--1202660.
${}^2$Partially supported by the Fundamental Research Funds for the Central Universities of China and NSFC(11071027).
${}^3$Research supported in part by NSF grants DMS-0900870 and DMS--1249186}

\subjclass[2000]{15A60, 47B47}
\keywords{Toeplitz matrices, quasinilpotent operators}

\date{10 October, 2012}

\begin{abstract}
We show that every self--adjoint matrix $B$ of trace $0$
can be realized as  $B=T+T^*$ for a nilpotent matrix $T$
with $\|T\|\le K\|B\|$,
for a constant $K$ that is independent of matrix size.
More particularly, if
$D$ is a diagonal, self--adjoint $n\times n$ matrix of trace $0$, then there is a unitary matrix $V=XU_n$,
where $X$ is an $n\times n$ permutation matrix and $U_n$ is the $n\times n$ Fourier matrix,
such that the upper triangular part, $T$,
of the conjugate $V^*DV$ of $D$
satisfies $\|T\|\le K\|D\|$.
This matrix $T$ is a strictly upper triangular Toeplitz matrix
such that $T+T^*=V^*DV$.
We apply this and related results to give partial answers to questions about real parts of quasinilpotent elements
in finite von Neumann algebras.
\end{abstract}

\maketitle

\section{Introduction}
\label{sec1}

It is well known and easy to show (by induction) that every self--adjoint matrix whose trace vanishes
is unitarily equivalent to a matrix having zero diagonal;
therefore, it is equal to the real part of a nilpotent operator.

Recall that an element $z$ of a Banach algebra is quasinilpotent if its spectrum is $\{0\}$, and that this
is equivalent to $\lim_{n\to\infty}\|z^n\|^{1/n}=0$.
Fillmore, Fong and Sourour showed~\cite{FFS79} that a self-adjoint operator $T$
on an infinite dimensional separable Hilbert space can be realized
as the real part $(Z+Z^*)/2$ of a quasinilpotent operator $Z$ if and only if
$0$ is in the convex hull of the essential spectrum of $T$.

Since
each quasinilpotent element of a II$_1$--factor has trace equal to zero
(by, for example, Proposition~4 of~\cite{MW79})
the following question seems natural:
\begin{ques}\label{ques:typeII1}
If $\Mcal$ is a II$_1$--factor with trace $\tau$ and if $a=a^*\in\Mcal$ has $\tau(a)=0$,
must there be a quasinilpotent operator $z\in\Mcal$ with $a=z+z^*$?
\end{ques}

Analogously, the following question is also natural:
\begin{ques}\label{ques:typeI}
If $\Mcal$ is a finite type I von Neumann algebra, and if $a=a^*\in\Mcal$ has center--valued trace equal to zero,
must there be a quasinilpotent operator $z\in\Mcal$ with $a=z+z^*$?
\end{ques}

An answer to Question~\ref{ques:typeI} will, necessarily, and an answer to Question~\ref{ques:typeII1} will, most likely,
involve a quantitative understanding of the problem in matrix algebras.
The main result of this paper (Theorem~\ref{thm:partial}) is a step in this direction.

Our interest in quasinilpotent operators in II$_1$--factors is partially motivated by the paper~\cite{HS09} of Haagerup
and Schultz.
In it, they show that every element of a II$_1$--factor whose Brown measure is not concentrated at a single point,
has a nontrivial hyperinvariant subspace.
Since the support of the Brown measure is contained in the spectrum, quasinilpotent operators are
examples of those to which the Haagerup--Schultz theorem does not apply and, indeed,
the hyperinvariant subspace problem remains open for quasinilpotent operators in II$_1$--factors.

The following result is a straightforward consequence of Theorem~8.1 of~\cite{HS09}.
\begin{thm}[\cite{HS09}]
For any element $T$ of a finite von Neumann algebra $\Mcal\subseteq\Bc(\HEu)$,
\begin{equation}\label{eq:HS}
A:=\text{s.o.t.--}\lim_{n\to\infty}((T^*)^nT^n)^{1/2n}
\end{equation}
exists, and $\supp(\mu_T)=\{0\}$ if and only if $A=0$.
\end{thm}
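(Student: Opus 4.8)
The plan is to unwind what Theorem~8.1 of \cite{HS09} already provides. That theorem states that, for $T$ in a finite von Neumann algebra $\Mcal$ with faithful normal trace $\tau$, the operators $((T^*)^nT^n)^{1/2n}$ converge in the strong operator topology to a positive element $A\in\Mcal$, and that the spectral distribution of $A$ with respect to $\tau$ — that is, the Brown measure $\mu_A$ — is the push--forward of $\mu_T$ under the modulus map $\lambda\mapsto|\lambda|$. (It is phrased there for a II$_1$--factor, but the argument applies equally to any finite von Neumann algebra with a faithful normal trace, so one may invoke it in the stated generality, if necessary after a central decomposition.) Granting this, the existence assertion in \eqref{eq:HS} is immediate, and it remains only to prove that $\supp(\mu_T)=\{0\}$ is equivalent to $A=0$.

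For that I would use the chain of elementary equivalences
\[
\supp(\mu_T)=\{0\}\iff\mu_T=\delta_0\iff(|\cdot|)_*\mu_T=\delta_0\iff\mu_A=\delta_0\iff A=0.
\]
The first holds because $\mu_T$ is a probability measure, so its support is $\{0\}$ precisely when it is the point mass $\delta_0$. The second holds because $\lambda\mapsto|\lambda|$ carries $\delta_0$ to $\delta_0$, while conversely $(|\cdot|)_*\mu_T=\delta_0$ forces $\mu_T$ to be concentrated on $\{\lambda:|\lambda|=0\}=\{0\}$. The third is exactly the identification of spectral distributions supplied by Theorem~8.1 of \cite{HS09}. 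The fourth holds because, for the positive operator $A$, one has $\mu_A=\delta_0$ if and only if $\tau(\mathbf 1_{(0,\infty)}(A))=0$; by faithfulness of $\tau$ this forces $\mathbf 1_{(0,\infty)}(A)=0$, hence $A=A\,\mathbf 1_{(0,\infty)}(A)=0$, and the reverse implication is trivial.

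I do not expect any genuine obstacle. All of the analytic content — existence of the strong--operator limit and the identification of its spectral distribution with the push--forward $(|\cdot|)_*\mu_T$ — is contained in Theorem~8.1 of \cite{HS09}; everything added above is formal, the only step that is not a tautology being the use of faithfulness of the trace to pass from $\tau$--nullity of the spectral projection $\mathbf 1_{(0,\infty)}(A)$ to its vanishing.
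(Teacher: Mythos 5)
Your argument is correct and is exactly what the paper intends when it calls the statement ``a straightforward consequence of Theorem~8.1 of~\cite{HS09}'': the paper itself gives no proof, merely citing that result, and your unwinding --- existence of the strong--operator limit $A$ plus the identification $\mu_A=(|\cdot|)_*\mu_T$ from Theorem~8.1, followed by the formal equivalences $\supp(\mu_T)=\{0\}\iff\mu_T=\delta_0\iff\mu_A=\delta_0\iff A=0$, the last using faithfulness of the trace --- is precisely the routine verification the authors leave implicit. The only point worth flagging is the passage from II$_1$--factors (as in~\cite{HS09}) to general finite von Neumann algebras, which you acknowledge and handle by central decomposition; that is adequate, since after fixing a faithful normal trace the argument of~\cite{HS09} localizes over the center.
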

The notation in~\eqref{eq:HS} is for the limit in strong operator topology on $\Bc(\HEu)$.
This result characterizes those operators to which the Haagerup--Schultz result on existence of hyperinvariant subspaces
does not apply, in terms that resemble
a characterization of quasinilpotency.
This motivates the following nomenclature.
\begin{defi}
Let $\HEu$ be a Hilbert space and let $T\in\Bc(\HEu)$.
We say $T$ is {\em s.o.t.--quasinilpotent} if
\[
\text{s.o.t.--}\lim_{n\to\infty}((T^*)^nT^n)^{1/2n}=0.
\]
\end{defi}
Clearly, every quasinilpotent element is s.o.t.--quasinilpotent, and the hyperinvariant subspace problem for elements
of II$_1$--factors is reduced to the question for s.o.t.--quasinilpotent operators in II$_1$--factors.
Furthermore, the analogues of Questions~\ref{ques:typeII1} and~\ref{ques:typeI} where ``s.o.t.--quasinilpotent''
replaces ``quasinilpotent'' are interesting, and we will answer positively the second of these.

Before we describe our main results, here some
interesting examples related to Question~\ref{ques:typeII1}.
\begin{example}
Let $\{x_1,x_2\}$ be free semicircular operators that generate the free group factor $L(\Fb_2)$.
Then $x_1$ and $x_2$ are real parts of quasinilpotent operators in $L(\Fb_2)$.
Indeed, $x_i/2$ is the real part of a
copy of the quasinilpotent DT--operator in $L(\Fb_2)$, by results of~\cite{DH04}.
\end{example}

\begin{example}
G.\ Tucci~\cite{T08}
found a family $(A_\alpha)_{0<\alpha<1}$ of quasinilpotent elements of the hyperfinite II$_1$--factor $R$,
each generating $R$ as a von Neumann algebra.
He showed for each $\alpha$, $\RealPart(A_\alpha)$ has the same moments as $\ImagPart(A_\alpha)$ and he found
a combinatorial formula for them.
He showed that
each $\RealPart(A_\alpha)$ generates a diffuse subalgebra of a Cartan masa in $R$, which is for some values of $\alpha$
all of the Cartan masa and for other values is a proper subalgebra of it.
\end{example}

Now we describe our main results.
It is straightforward to see (the details can be found in Section~\ref{sec2})
that if a diagonal matrix $D=\diag(\lambda_1,\cdots,\lambda_n)$ has zero trace,
then the conjugate $B=U_n^*DU_n$ of this matrix by the $n\times n$ Fourier matrix $U_n=\frac1{\sqrt n}(\omega_n^{(j-1)(k-1)})_{1\le j,k\le n}$,
where $\omega_n=e^{2\pi\i/n}$, is a Toeplitz matrix (meaning the $(i,j)$th entry depends only on $i-j$),
has all zeros on the diagonal, and the upper triangular part of it, which we will call $T_\lambda$, satisfies
$T_\lambda+T_\lambda^*=B$.
Here $\lambda$ denotes the sequence $(\lambda_1,\ldots,\lambda_n)$ and we have $\|D\|=\|\lambda\|_\infty:=\max_j|\lambda_j|$.
Note that $T_\lambda$ is in fact an upper triangular Toeplitz matrix, and is nilpotent.
A key issue is: how large is the norm of $T_\lambda$ compared to the norm of $D$?

The matrix $T_\lambda$ is the image of $B$ under the upper triangular truncation operator.
The asymptotic behaviour of the norm of this upper triangular truncation operator on the $n\times n$ matrices was
determined by Angelos, Cowen and Narayan in~\cite{ACN92} to be $\frac1\pi\log(n)+O(1)$ as $n\to\infty$
(see Example 4.1 of~\cite{D88} and~\cite{KP70} for earlier results).
Our main result (Theorem~\ref{thm:partial}) is that there is a constant $K$ such that for every
natural number $n$ and every finite
real sequence $\lambda=(\lambda_1,\ldots,\lambda_n)$
that sums to zero, there is a rearrangement $\lambdat$ of $\lambda$
such that $\|T_\lambdat\|\le K\|\lambda\|_\infty$.
A value for the constant $K<1.78$ (though not, to our knowledge, the best possible value)
and the rearrangement $\lambdat$ are found explicitly.
The only requirement on the rearrangement
is that the partial sums of the rearranged sequence do not exceed $\|\lambda\|_\infty$ in absolute value.

We observe that rearrangement is necessary by
making the estimate (Proposition~\ref{prop:lowerbd}) that when $\lambda=(1,\ldots,1,-1,\ldots,-1)$ of
length $2n$ sums to zero,
then we have $\|T_\lambda\|\ge\frac1\pi\log(2n)+C$ for a constant $C$, independent of $n$.
In fact, the same asymptotic lower bound estimate, but for some different upper triangular Toeplitz matrices,
was obtained by Angelos, Cowen and Narayan~\cite{ACN92}.

We also prove a slightly different rearrangement result of a similar nature (Proposition~\ref{prop:Dm2}),
for use in taking inductive limits.

In Section~\ref{sec:vN}, we apply our main theorem to give some results in type I von Neumann algebras
related to Question~\ref{ques:typeI}
and also draw some consequences in II$_1$--factors.

In Section~\ref{sec:indlim}, we apply the related rearrangement result in an inductive limit
to prove results about II$_1$--factors.
Finally, we ask a further specific question.

{\bf Acknowledgement:}  The authors thank Paul Skoufranis for helpful comments about an earlier version of the paper.

\section{Upper triangular Toeplitz matrices}
\label{sec2}

For $n\in\Nats$ and $p\in\{0,1,\ldots,n\}$, let $M_n$ denote the set of $n\times n$ matrices with complex values and let $\UT_n^{(p)}$ denote the set of matrices $x\in M_n$ that have zero entries everywhere below the diagonal and on the first $p$ diagonals on and above the main diagonal.
That is, $x=(x_{ij})_{1\le i,j\le n}$ belongs to $\UT_n^{(p)}$ if and only if $x_{ij}=0$ whenever $j<i+p$.
So a matrix is strictly upper triangular if and only if it belongs to $\UT_n^{(1)}$.

An $n\times n$ matrix $X=(x_{ij})_{1\le i,j\le n}\in M_n$ is said to be a {\em Toeplitz matrix}
if $x_{ij}$ depends only on $i-j$.
We let $\UTTM_n^{(p)}$ be the set of all Toeplitz matrices that belong to $\UT_n^{(p)}$.
Every $T\in\UTTM_n^{(1)}$ is of the form
\begin{equation}\label{eq:T}
T=\left(\begin{matrix}
0&t_1&t_2&\cdots&t_{n-1} \\
&0&t_1&\ddots&\vdots \\
&&\ddots&\ddots&t_2 \\
&&&0&t_1 \\
&&&&0
\end{matrix}\right)
\end{equation}
and is nilpotent.
Moreover, $\UTTM_n^{(0)}$ is a commutative algebra.

We now describe in more detail the matrices $T_\lambda$ mentioned in the introduction.
Let $\omega_n=e^{2\pi \i/n}$.
Recall that then $\sum_{j=0}^{n-1}\omega_n^{dj}=0$ whenever $d$ is an integer
that is not divisible by $n$ and, consequently,
$f_1,f_2,\ldots,f_n$ is an orthonormal basis for $\Cpx^n$, where
\[
f_k=\frac1{\sqrt n}\sum_{j=1}^n\omega_n^{(k-1)(j-1)}e_j.
\]

Let $\Vc_n$ be the real vector space consisting of all real sequences
$\lambda=(\lambda_1,\ldots,\lambda_n)$
such that $\sum_{j=1}^n\lambda_j=0$.

For $\lambda\in\Vc_n$,
consider the matrix $D=\diag(\lambda_1,\ldots,\lambda_n)$ with respect to the standard basis in $\Cpx^n$,
and let us write it as a matrix, $B$, with respect to the basis $f_1,\ldots,f_n$.
We have
\begin{equation}\label{eq:Dff}
\langle Df_k,f_l\rangle
=\frac1n\bigg\langle\sum_{p=1}^n\lambda_p\omega_n^{(k-1)(p-1)}e_p,\sum_{q=1}^n\omega_n^{(l-1)(q-1)}e_q\bigg\rangle
=\frac1n\sum_{p=1}^n\lambda_p\omega_n^{(k-l)(p-1)}.
\end{equation}
Then the change--of--basis matrix whose columns are $f_1,\ldots,f_n$ is the $n\times n$ Fourier matrix,
$U_n=\frac1{\sqrt n}(\omega_n^{(j-1)(k-1)})_{1\le j,k\le n}$,
and we have
\begin{align}\label{eq:number}
B=U_n^*DU_n=\left(\begin{matrix}
0&t_1&t_2&&\cdots&t_{n-1} \\
\overline{t_1}&0&t_1&\ddots&&\vdots \\
\overline{t_2}&\overline{t_1}&0&\ddots \\
\vdots&\ddots&\ddots&\ddots&\ddots&\vdots \\
\vdots&&&\ddots&0&t_1 \\
\overline{t_{n-1}}&\cdots&&\cdots&\overline{t_1}&0
\end{matrix}\right)
\end{align}
is a Toeplitz matrix, where
\begin{equation}\label{eq:td}
t_d=t_d(\lambda)=\frac1n\sum_{p=1}^n\lambda_p\omega_n^{d(p-1)}.
\end{equation}
Let $T_\lambda\in\UTTM_n^{(1)}$ be the upper triangular part of $B$.
By construction, $T_\lambda+T_\lambda^*$ is a self--adjoint $n\times n$ matrix
with eigenvalues $\lambda_1,\ldots,\lambda_n$.
Thus, the map $\Vc_n\ni\lambda\mapsto T_\lambda\in\UTTM_n^{(1)}$ is linear and injective.
\begin{remark}\label{rem:Tlambdas}
From~\eqref{eq:td} we see
\begin{equation}\label{eq:tnd}
t_{n-d}=\overline{t_d},\qquad(1\le d\le n-1).
\end{equation}
Considering dimensions, we see that the map $\lambda\to T_\lambda$ is a linear ismomorphism from $\Vc_n$
onto the set of complex upper triangular Toeplitz matrices
of the form~\eqref{eq:T}
for which~\eqref{eq:tnd} holds.
\end{remark}

For $\lambda\in\Vc_n$, let $\|\lambda\|_\infty=\max_j|\lambda_j|$.
We regard such sequences as maps from $\{1,\ldots,n\}$ to $\Reals$ and, thus,
for $\sigma\in S_n$, i.e., $\sigma$ a permutation of $\{1,\ldots,n\}$,
$\lambda\circ\sigma\in\Vc_n$ denotes the sequence $(\lambda_{\sigma(1)},\ldots,\lambda_{\sigma(n)})$.
As described in the introduction, we will find a constant $K$, independent of $n$,
such that for every $\lambda\in\Vc_n$, there is $\sigma\in S_n$ such that
\begin{align}
\label{eq:thebound}
\|T_{\lambda\circ\sigma}\|\le K\|\lambda\|_\infty=K\|B\|.
\end{align}

We will require some elementary lemmas.
The next lemma is a simple observation about a known series expansion of the cotangent function.
\begin{lemma}\label{lemma2}
\begin{enumerate}[(i)]
\item
For $x\in (0,1)$,
\[
\cot(\pi x)=\frac{1}{\pi x}-\frac{1}{\pi}\sum_{k=1}^\infty \frac{2x}{k^2-x^2}\,.
\]
\item
The function \[f_1(x)=\frac{1}{\pi}\sum_{k=1}^\infty \frac{2x}{k^2-x^2}\] increases on the set $[0,\frac12]$ to the maximum value $\frac{4}{\pi}\sum_{k=1}^\infty\frac{1}{4k^2-1}=\frac{2}{\pi}$.
\end{enumerate}
\end{lemma}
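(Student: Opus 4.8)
The plan is to treat the two parts separately, with part (i) being essentially a restatement of a classical fact and part (ii) reducing to it.

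For part (i), I would invoke the Mittag--Leffler partial fraction expansion of the cotangent, namely $\pi\cot(\pi z)=\frac1z+\sum_{k=1}^\infty\frac{2z}{z^2-k^2}$, valid for $z\in\Cpx\setminus\Ints$, which is standard (it can be derived by comparing the logarithmic derivative of the Weierstrass product $\sin(\pi z)=\pi z\prod_{k\ge1}(1-z^2/k^2)$). Dividing by $\pi$ and restricting to $z=x\in(0,1)$ yields exactly $\cot(\pi x)=\frac1{\pi x}-\frac1\pi\sum_{k=1}^\infty\frac{2x}{k^2-x^2}$; for such $x$ every denominator $k^2-x^2$ is positive, so the series has positive terms and converges absolutely, and no issue of conditional convergence arises.

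For part (ii), I would first check monotonicity of the summands: for fixed $k\ge1$ and $x\in[0,\tfrac12]$ (where $k^2-x^2>0$) the derivative of $x\mapsto\frac{2x}{k^2-x^2}$ equals $\frac{2(k^2+x^2)}{(k^2-x^2)^2}>0$, so each term is nonnegative and strictly increasing on $[0,\tfrac12]$. Since on this interval $0\le\frac{2x}{k^2-x^2}\le\frac{1}{k^2-\tfrac14}=O(k^{-2})$, the series converges uniformly on $[0,\tfrac12]$; hence $f_1$ is continuous there and, being a locally uniform limit of increasing functions, is itself increasing on $[0,\tfrac12]$. Therefore $f_1$ attains its maximum over $[0,\tfrac12]$ at $x=\tfrac12$, with value $f_1(\tfrac12)=\frac1\pi\sum_{k=1}^\infty\frac{1}{k^2-\tfrac14}=\frac4\pi\sum_{k=1}^\infty\frac{1}{4k^2-1}$.

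It remains only to evaluate this sum, and the cleanest route is to use part (i) at $x=\tfrac12$: since $\cot(\pi/2)=0$, the identity gives $f_1(\tfrac12)=\frac{1}{\pi\cdot\tfrac12}-\cot(\pi/2)=\frac2\pi$. (Equivalently, the telescoping identity $\frac{1}{4k^2-1}=\frac12\big(\frac{1}{2k-1}-\frac{1}{2k+1}\big)$ shows $\sum_{k=1}^\infty\frac{1}{4k^2-1}=\frac12$, whence $\frac4\pi\cdot\frac12=\frac2\pi$.) There is no substantial obstacle here; the only point deserving a word of care is the uniform convergence used to transfer continuity and monotonicity from the summands to $f_1$, and that is supplied by the $O(k^{-2})$ bound above.
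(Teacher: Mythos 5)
Your proposal is correct. The paper states Lemma \ref{lemma2} without proof, calling it ``a simple observation about a known series expansion of the cotangent function,'' and your argument is precisely the standard one the authors had in mind: part (i) is the Mittag--Leffler partial fraction expansion of $\pi\cot(\pi z)$ restricted to $(0,1)$, and part (ii) follows from termwise monotonicity together with the telescoping evaluation $\sum_{k\ge1}\frac1{4k^2-1}=\frac12$ (or, equivalently, from plugging $x=\tfrac12$ into part (i)). Your note on uniform convergence via the $O(k^{-2})$ bound is a sensible bit of care, though for monotonicity alone one could also observe directly that a pointwise sum of increasing functions is increasing.
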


\begin{lemma}\label{lem:Hn}
Let $n\in\Nats$ and let
\[
H_n=\frac1n\begin{pmatrix}
0 &\i&\i&\cdots& \hspace*{0.8em} \i \\
-\i&0&\i& & \hspace*{0.8em} \vdots \\
-\i&-\i&0&\ddots& \hspace*{0.8em} \vdots \\
\vdots&&\ddots&\ddots& \hspace*{0.8em} \i \\[1ex]
-\i &\cdots &\cdots & -\i & \hspace*{0.8em} 0 \hspace*{0.8em}
\end{pmatrix}
\]
be the $n\times n$ self-adjoint matrix having zeros on the diagonal and all entries above the diagonal equal to $\i:=\sqrt{-1}$.
Then an orthonormal list of eigenvectors of $H_n$ is $(v_k)_{k=0}^{n-1}$,
and the associated eigenvalues are $(\mu_k)_{k=0}^{n-1}$, where
if $n$ is odd, then
\begin{align}
v_k&=\frac1{\sqrt n}(1,-\omega_n^k,(-\omega_n^k)^2,\ldots,(-\omega_n^k)^{n-1})^t, \notag \\
\mu_k&=\frac{\i}n\sum_{j=1}^{n-1}(-\omega_n^{k})^j=\frac{1}{n}\tan\left(\frac{\pi k}{n}\right) \label{eq:mukOdd}
\end{align}
and we have $\mu_0=0$ and $\mu_k=-\mu_{n-k}$ if $1\le k\le n-1$,
while if $n$ is even, then
\begin{align}
v_k&=\frac1{\sqrt n}(1,\omega_n^k\omega_{2n},(\omega_n^k\omega_{2n})^2,
 \ldots,(\omega_n^k\omega_{2n})^{n-1})^t, \notag \\
\mu_k&=\frac{\i}n\sum_{j=1}^{n-1}(\omega_n^k\omega_{2n})^j=-\frac 1n\cot\left(\frac{\pi(2k+1)}{2n}\right) \label{eq:mukEven}
\end{align}
and we have $\mu_{n-1-k}=-\mu_k$ for all $0\le k\le n-1$.
Note that in all cases, we have $v_k=W_n^kv_0$ for all $0\le k\le n-1$, where
\[
W_n=\diag(1,\omega_n,\omega_n^2,\ldots,\omega_n^{n-1}).
\]
\end{lemma}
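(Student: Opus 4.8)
The plan is to verify everything directly: that each $v_k$ is an eigenvector of $H_n$ with the stated eigenvalue, that the $v_k$ form an orthonormal list, and that $v_k=W_n^kv_0$. Write $z=z_k$ for the common ratio of the geometric progression defining $v_k$, so $z_k=-\omega_n^k$ when $n$ is odd and $z_k=\omega_n^k\omega_{2n}$ when $n$ is even, and in both cases $(v_k)_j=\tfrac1{\sqrt n}z_k^{\,j-1}$ for $1\le j\le n$. The one fact driving the whole computation is that $z_k^n=-1$ in both cases: for $n$ odd, $z_k^n=(-1)^n\omega_n^{kn}=-1$; for $n$ even, $z_k^n=\omega_n^{kn}\omega_{2n}^n=\omega_{2n}^n=e^{\pi\i}=-1$. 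Moreover $z_k\ne1$, since $-1$ is not an $n$th root of unity for $n$ odd and $2k+1$ is never a multiple of $2n$; the same parity observations will make $\tan$ and $\cot$ finite at the arguments that appear.

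For orthonormality I would compute $\langle v_k,v_{k'}\rangle=\tfrac1n\sum_{j=0}^{n-1}(z_k\overline{z_{k'}})^{\,j}$, and note that in both the odd and even cases one has $z_k\overline{z_{k'}}=\omega_n^{k-k'}$, so the sum equals $\delta_{kk'}$ by the orthogonality of roots of unity, for $0\le k,k'\le n-1$. Since this list has length $n$ it is an orthonormal basis, consistent with the self-adjointness of $H_n$. The identity $v_k=W_n^kv_0$ is then immediate from the explicit formulas, since multiplication by $W_n^k=\diag(1,\omega_n^k,\ldots,\omega_n^{(n-1)k})$ sends $z_0^{\,j-1}$ to $(\omega_n^k z_0)^{j-1}=z_k^{\,j-1}$.

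For the eigenvalue computation, fix $j$ and split the sum defining $(H_nv_k)_j$ into the contributions from the entries with $\ell>j$ (coefficient $\i/n$) and those with $\ell<j$ (coefficient $-\i/n$):
\[
(H_nv_k)_j=\frac{\i}{n\sqrt n}\Bigl(\sum_{\ell=j+1}^{n}z^{\ell-1}-\sum_{\ell=1}^{j-1}z^{\ell-1}\Bigr)
=\frac{\i}{n\sqrt n}\cdot\frac{z^n-z^j-z^{j-1}+1}{z-1}.
\]
Substituting $z^n=-1$ collapses the numerator to $-z^{j-1}(z+1)$, so $(H_nv_k)_j=\frac{\i}{n}\cdot\frac{z+1}{1-z}\cdot\frac{z^{j-1}}{\sqrt n}=\mu_k(v_k)_j$ with $\mu_k=\frac{\i}{n}\cdot\frac{z+1}{1-z}$. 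Summing the geometric series $\sum_{j=1}^{n-1}z^j=\frac{z-z^n}{1-z}$ and again using $z^n=-1$ shows this equals $\frac{\i}{n}\sum_{j=1}^{n-1}z^j$, matching the first equality in \eqref{eq:mukOdd} and \eqref{eq:mukEven}.

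Finally I would convert $\mu_k=\frac{\i}{n}\cdot\frac{1+z}{1-z}$ into trigonometric form. For $n$ odd, put $z=-e^{2\i\theta}$ with $\theta=\pi k/n$; multiplying numerator and denominator by $e^{-\i\theta}$ gives $\frac{1+z}{1-z}=\frac{e^{-\i\theta}-e^{\i\theta}}{e^{-\i\theta}+e^{\i\theta}}=-\i\tan\theta$, hence $\mu_k=\frac1n\tan(\pi k/n)$, with $\mu_0=0$ and $\mu_{n-k}=\frac1n\tan(\pi-\pi k/n)=-\mu_k$. For $n$ even, put $z=e^{2\i\phi}$ with $\phi=\pi(2k+1)/(2n)$; the same manipulation gives $\frac{1+z}{1-z}=\frac{e^{-\i\phi}+e^{\i\phi}}{e^{-\i\phi}-e^{\i\phi}}=\i\cot\phi$, hence $\mu_k=-\frac1n\cot(\pi(2k+1)/(2n))$, with $\mu_{n-1-k}=-\frac1n\cot(\pi-\phi)=-\mu_k$. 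There is no genuine obstacle here beyond keeping the two parity cases and the trigonometric bookkeeping straight; the single point worth isolating is the uniform identity $z_k^n=-1$, which is exactly what makes the truncated geometric sums telescope into the single ratio $\frac{1+z}{1-z}$.
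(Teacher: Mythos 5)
Your proof is correct and follows essentially the same approach as the paper's: verify directly that each $v_k$ is an eigenvector by computing the geometric partial sums in $H_nv_k$, then convert the resulting ratio to trigonometric form. The one organizational improvement you make is the unifying observation $z_k^n=-1$, which lets you treat the odd and even cases by a single telescoping computation (and which the paper handles by two parallel manipulations); you also spell out the orthonormality check and the identity $v_k=W_n^kv_0$, which the paper dismisses as following easily.
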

\begin{proof}
One calculates $H_nv_k$ to be the vector whose $p$th entry, for $1\le p\le n$, is
\begin{equation}\label{eq:Hnvk}
\begin{cases}
\frac{\i}{n\sqrt n}\left(-\sum_{j=0}^{p-2}(-\omega_n^k)^j+\sum_{j=p}^{n-1}(-\omega_n^k)^j\right),&n\text{ odd} \\
\frac{\i}{n\sqrt n}\left(-\sum_{j=0}^{p-2}(\omega_n^k\omega_{2n})^j+\sum_{j=p}^{n-1}(\omega_n^k\omega_{2n})^j\right),
&n\text{ even,}
\end{cases}
\end{equation}
where the sum $\sum_{j=0}^{p-2}$ is taken to be zero if $p=1$,  as is the sum $\sum_{j=p}^{n-1}$ if $p=n$.
The quantity~\eqref{eq:Hnvk} equals $\frac1{\sqrt n}(-\omega_n^k)^{p-1}$ or,
respectively, $\frac1{\sqrt n}(\omega_n^k\omega_{2n})^{p-1}$
times $\mu_k$, for $n$ odd and, respectively, even.
This shows that $v_k$ is an eigenvector with eigenvalue $\mu_k$.

By standard properties of geometric progressions and trigonometry, we derive for $n$ odd
\begin{equation*}
\mu_k=\frac{\i}{n}\cdot\frac{1-\omega_n^k}{1+\omega_n^k}=\frac2n\cdot\frac{\im(\omega_n^k)}{|1+\omega_n^k|^2}=\frac{1}{n}\tan\left(\frac{\pi k}{n}\right)
\end{equation*}
and for $n$ even
\begin{equation*}
\mu_k=\frac{\i}{n}\cdot\frac{\omega_{2n}^{2k+1}+1}{1-\omega_{2n}^{2k+1}}
=\frac{-2}{n}\cdot\frac{\im(\omega_{2n}^{2k+1})}{|1-\omega_{2n}^{2k+1}|^2}
=-\frac 1n\cot\left(\frac{\pi(2k+1)}{2n}\right).
\end{equation*}
All other assertions follow easily.
\end{proof}

\begin{remark}\label{rem:Hn}
From Lemma~\ref{lem:Hn} we get
\[
H_n=\sum_{k=1}^n\mu_kW_n^kQ_n(W_n^*)^k,
\]
where $Q_n$ is the rank--one projection onto the span of the vector $v_0$,
and the $n$ rank--one projections $W_n^kQ_n(W_n^*)^k$ for $1\le k\le n$ are pairwise orthogonal.
(Note: we let $\mu_n=\mu_0$ and $v_n=v_0$, while of course $W_n^n$ is the identity matrix.)
\end{remark}

The following facts follow directly from the formulas~\eqref{eq:mukOdd} and~\eqref{eq:mukEven}.
\begin{remark}\label{rem:muk}
If $n$ is odd, then the sequence $(\mu_k)_{k=1}^n$ is of the form
\begin{equation*}
(a_1,a_2,\ldots,a_{(n-1)/2},-a_{(n-1)/2},\ldots,-a_2,-a_1,0),
\end{equation*}
where
\[
0<a_1<a_2<\cdots<a_{(n-1)/2}<\frac2\pi,
\]
while if $n$ is even then $(\mu_k)_{k=0}^{n-1}$ is of the form
\begin{equation*}
(-b_1,-b_2,\ldots,-b_{n/2},b_{n/2},\ldots,b_2,b_1),
\end{equation*}
where
\[
\frac2\pi>b_1>b_2>\cdots>b_{n/2}>0.
\]
\end{remark}

The next lemma is just the well known scheme behind Dirichlet's test.
We include the proof for convenience.
\begin{lemma}\label{lem:DIrichlet}
Let $n\in\Nats$ and suppose $a_1,\ldots,a_n,b_1,\ldots,b_n\in\Reals$ where
the sequence $a_1,\ldots,a_n$ monotone and $b_1+\cdots+b_n=0$.
Let
\[
M=\max_{1\le k\le n}\left|\sum_{j=1}^k b_j\right|.
\]
Then
\[
\left|\sum_{j=1}^n a_jb_j\right|\le M|a_n-a_1|.
\]
\end{lemma}
\begin{proof}
Let $B_k=\sum_{j=1}^kb_j$, with $B_0=0$.
By hypothesis, $B_n=0$.
Then
\[
S:=\sum_{j=1}^na_jb_j=\sum_{j=1}^na_j(B_j-B_{j-1})
=\sum_{j=1}^{n-1}B_j(a_j-a_{j+1}).
\]
Using monotonicity of $a_1,\ldots,a_n$, we get
\[
|S|\le\sum_{j=1}^{n-1}M|a_j-a_{j+1}|=M|a_1-a_n|.
\]
\end{proof}

The next theorem shows that a constant
$K<1.78$ can be obtained in~\eqref{eq:thebound}.
\begin{thm}\label{thm:partial}
Let $n\in\Nats$ and let $\lambda\in\Vc_n$.
Then there is a permutation $\sigma\in S_n$ such that
\begin{equation}\label{eq:main}
\|T_{\lambda\circ\sigma}\|\le K\|\lambda\|_\infty
\end{equation}
with $K=\frac12+\frac4\pi$.
\end{thm}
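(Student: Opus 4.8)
The plan is to split $T_\lambda$ into a part of obviously controlled norm plus a ``signed'' part whose norm we can compute exactly, and then to use the freedom in $\sigma$ to control the latter. Write $B=U_n^*DU_n=T_\lambda+T_\lambda^*$ as in~\eqref{eq:number} and set $S=T_\lambda-T_\lambda^*$, so that $T_\lambda=\tfrac12(B+S)$ and hence $\|T_\lambda\|\le\tfrac12\|B\|+\tfrac12\|S\|$. Since $B$ is unitarily equivalent to $D$ we have $\|B\|=\|\lambda\|_\infty$, so it suffices to produce, for a suitable rearrangement $\lambda\circ\sigma$ (noting $\|\lambda\circ\sigma\|_\infty=\|\lambda\|_\infty$), the bound $\|S\|\le\tfrac8\pi\|\lambda\|_\infty$.

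The key step is to diagonalize $S$. First observe that $S$ is the Schur (Hadamard) product $J_n\circ B$, where $J_n$ is the $n\times n$ matrix with $0$ on the diagonal, $1$ above it and $-1$ below it: indeed $J_n\circ B$ coincides with $T_\lambda$ above the diagonal, with $-T_\lambda^*$ below it, and vanishes on the diagonal. By~\eqref{eq:td}, $B=\sum_{p=1}^n\lambda_pM^{(p)}$, where $M^{(p)}$ has $(i,j)$ entry $\tfrac1n\omega_n^{(p-1)(j-i)}$; thus $M^{(p)}=(W_n^*)^{p-1}M^{(1)}W_n^{p-1}$ with $W_n=\diag(1,\omega_n,\dots,\omega_n^{n-1})$ and $M^{(1)}$ the rank-one matrix all of whose entries equal $\tfrac1n$. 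Since conjugation by the diagonal unitary $W_n$ commutes with Schur multiplication by any fixed pattern, $J_n\circ M^{(p)}=(W_n^*)^{p-1}(J_n\circ M^{(1)})W_n^{p-1}$, and $J_n\circ M^{(1)}=\tfrac1nJ_n=-\i H_n$ because $\i J_n=nH_n$ (with $H_n$ as in Lemma~\ref{lem:Hn}). Therefore
\[
S=J_n\circ B=-\i\sum_{p=1}^n\lambda_p\,(W_n^*)^{p-1}H_nW_n^{p-1}.
\]
Now insert the spectral decomposition $H_n=\sum_k\mu_kR_k$ from Remark~\ref{rem:Hn}, where $R_k=W_n^kQ_n(W_n^*)^k$, an $n$-periodic family of pairwise orthogonal rank-one projections with $\sum_{k=1}^nR_k=I$. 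Conjugation by $W_n^{p-1}$ merely shifts the index: $(W_n^*)^{p-1}H_nW_n^{p-1}=\sum_k\mu_kR_{k-p+1}=\sum_m\mu_{m+p-1}R_m$. Consequently
\[
S=-\i\sum_m\Big(\sum_{p=1}^n\lambda_p\,\mu_{m+p-1}\Big)R_m,\qquad\text{hence}\qquad\|S\|=\max_m\Big|\sum_{p=1}^n\lambda_p\,\mu_{(m+p-1)\bmod n}\Big|,
\]
the last equality because the $R_m$ are orthogonal rank-one projections summing to $I$ (all indices read modulo $n$).

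It remains to bound these cyclic ``correlations''. Using $\sum_j\lambda_j=0$, choose $\sigma\in S_n$ so that every partial sum $B_k:=\sum_{j=1}^k\lambda_{\sigma(j)}$ satisfies $|B_k|\le\|\lambda\|_\infty$; this is possible by a greedy construction, appending at each step a remaining entry that is nonnegative or nonpositive according to whether the current running sum is $\le0$ or $>0$. Replacing $\lambda$ by $\lambda\circ\sigma$ we may assume $|B_k|\le\|\lambda\|_\infty$ for all $k$, with $B_0=B_n=0$. Fix $m$ and put $c_p=\mu_{(m+p-1)\bmod n}$ for $p=1,\dots,n$; this is a cyclic rearrangement of $(\mu_1,\dots,\mu_n)$. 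By the Abel-summation identity used in the proof of Lemma~\ref{lem:DIrichlet}, $\sum_{p=1}^n\lambda_pc_p=\sum_{p=1}^{n-1}B_p(c_p-c_{p+1})$, so $\big|\sum_p\lambda_pc_p\big|\le\|\lambda\|_\infty\sum_{p=1}^{n-1}|c_{p+1}-c_p|$. The right-hand sum is at most the cyclic total variation $\sum_{q\pmod n}|\mu_{q+1}-\mu_q|$; by Remark~\ref{rem:muk} the cyclic sequence $(\mu_k)$ is unimodal --- it has a single increasing run from $\min_k\mu_k$ to $\max_k\mu_k$ and then a single decreasing step back --- so that cyclic total variation equals $2(\max_k\mu_k-\min_k\mu_k)=4\max_k|\mu_k|<\tfrac8\pi$, again by Remark~\ref{rem:muk}. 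Hence $\|S\|<\tfrac8\pi\|\lambda\|_\infty$ and $\|T_{\lambda\circ\sigma}\|\le\tfrac12\|\lambda\|_\infty+\tfrac12\cdot\tfrac8\pi\|\lambda\|_\infty=(\tfrac12+\tfrac4\pi)\|\lambda\|_\infty$, which is the claim with $K=\tfrac12+\tfrac4\pi$.

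I expect the main obstacle to be the structural step of the second paragraph: recognizing that the skew-Hermitian Toeplitz matrix $S=T_\lambda-T_\lambda^*$ is unitarily equivalent to a diagonal matrix whose entries are the cyclic correlations of the diagonal sequence $\lambda$ against the eigenvalue sequence $(\mu_k)$ of $H_n$. This is what converts an operator-norm estimate for a family of non-normal truncations into a one-variable Abel-summation estimate, and it simultaneously makes clear why a rearrangement of $\lambda$ with bounded partial sums is both necessary (compare Proposition~\ref{prop:lowerbd}) and sufficient. Once that is in hand, the greedy rearrangement, Abel summation, and the unimodality and boundedness of $(\mu_k)$ recorded in Remark~\ref{rem:muk} fit together to give exactly the constant $\tfrac12+\tfrac4\pi$.
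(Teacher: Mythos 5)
Your proof is correct and follows essentially the same route as the paper's: decompose $T_\lambda$ into real and imaginary parts, diagonalize $\i(T_\lambda-T_\lambda^*)=\sum_p\lambda_p(W_n^*)^{p-1}H_nW_n^{p-1}$ via the spectral decomposition of $H_n$ from Lemma~\ref{lem:Hn} and Remark~\ref{rem:Hn}, express $\|T_\lambda-T_\lambda^*\|$ as a maximum of cyclic correlations of $\lambda$ against $\mu$, and then control these by Abel summation after choosing $\sigma$ so that the partial sums of $\lambda\circ\sigma$ stay within $\|\lambda\|_\infty$. The only differences are cosmetic: you derive the skew-Hermitian part via a Schur-product identity $S=J_n\circ B$ where the paper simply expands the matrix directly, and in the last estimate you bound the Abel sum by the cyclic total variation $2(\max\mu-\min\mu)<8/\pi$ against partial sums $\le\|\lambda\|_\infty$, whereas the paper rotates $\mu$ to a monotone sequence (range $<4/\pi$) and pays a factor $2$ in the partial-sum bound for the rotated $\lambda\circ\sigma$; both bookkeepings yield exactly $\frac8\pi\|\lambda\|_\infty$.
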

\begin{proof}
Since $\lambda$ is the eigenvalue sequence of $T_{\lambda\circ\sigma}+T_{\lambda\circ\sigma}^*$ for every $\sigma$,
we have $\|\RealPart T_{\lambda\circ\sigma}\|=\frac12\|\lambda\|_\infty$.
Thus, it will suffice to find $\sigma$ so that
\[
\|\i(T_{\lambda\circ\sigma}-T_{\lambda\circ\sigma}^*)\|\le\frac8\pi\|\lambda\|_\infty.
\]
Using~\eqref{eq:td}, we have
\begin{align*}
\i(T_\lambda-T_\lambda^*)&=\i
\begin{pmatrix}
0 &t_1&t_2&\cdots& t_{n-1} \\
-\overline{t_1}&0&t_1& \ddots & \vdots \\
-\overline{t_2}&-\overline{t_1}&0&\ddots& t_2 \\
\vdots& \ddots &\ddots&\ddots& t_1 \\[1ex]
-\overline{t_{n-1}} &\cdots & -\overline{t_2} & -\overline{t_1} & 0 \hspace*{0.2em}
\end{pmatrix} \displaybreak[2] \\[2ex]
&=\frac{\i}n\sum_{p=1}^n\lambda_p
\begin{pmatrix}
0 &\omega_n^{p-1}&\omega_n^{2(p-1)}&\cdots&  \omega_n^{(n-1)(p-1)} \\
-\overline{\omega_n}^{p-1}&0&\omega_n^{p-1}& \ddots &  \vdots \\
-\overline{\omega_n}^{2(p-1)}&-\overline{\omega_n}^{p-1}&0&\ddots&  \omega_n^{2(p-1)} \\
\vdots&\ddots&\ddots&\ddots&  \omega_n^{p-1} \\[1ex]
-\overline{\omega_n}^{(n-1)(p-1)} &\cdots & -\overline{\omega_n}^{2(p-1)} & -\overline{\omega_n}^{p-1} &  0
\end{pmatrix} \\[2ex]
&=\sum_{p=1}^{n}\lambda_p(W_n^*)^{p-1}H_nW_n^{p-1}.
\end{align*}
Using now Remark~\ref{rem:Hn}, we have
\[
\i(T_\lambda-T_\lambda^*)
=\sum_{p=1}^n\lambda_p\sum_{k=1}^n\mu_kW_n^{k-p+1}Q_nW_n^{p-k-1}
=\sum_{l=1}^n\left(\sum_{k=1}^n\lambda_{k-l+1}\mu_k\right)W_n^l Q_n(W_n^*)^l,
\]
where $k-l+1$ in the subscript of $\lambda$ is taken modulo $n$ in the range from $1$ to $n$.
Consequently,
\[
\|T_\lambda-T_\lambda^*\|=\max_{1\le l\le n}\left|\sum_{k=1}^n\lambda_{k-l+1}\mu_k\right|
=\max_{1\le l\le n}\left|(\lambda\circ\rho_n^{l-1})\cdot\mu\right|,
\]
where $\cdot$ represents the usual scalar product, $\rho_n$ is the full cycle permutation $\rho_n(j)=j-1$ (mod $n$)
and $\mu=(\mu_1,\ldots,\mu_n)$.
We seek a permutation $\sigma$ making
\[
\max_{1\le l\le n}|(\lambda\circ\sigma\circ\rho_n^{l-1})\cdot\mu|
\le\frac8\pi\|\lambda\|_\infty,
\]
since the quantity on the left is $\|T_{\lambda\circ\sigma}-T_{\lambda\circ\sigma}^*\|$.
Since we run through all rotations $\rho_n^{l-1}$, we may without loss of generality replace $\mu$ by $\mu\circ\rho_n^m$
for any $m$.
From Remark~\ref{rem:muk}, we see that some such $\mu\circ\rho_n^m$ is monotone, with largest element $<2/\pi$
and smallest element $>-2/\pi$.
Now we choose $\sigma$ so that all partial sums of $\lambda\circ\sigma$ are of absolute value $\le\|\lambda\|_\infty$.
This implies that all partial sums of all rotations $\lambda\circ\sigma\circ\rho_n^{l-1}$ are of absolute value $\le2\|\lambda\|_\infty$.
Now Lemma~\ref{lem:DIrichlet} implies
\[
\|T_{\lambda\circ\sigma}-T_{\lambda\circ\sigma}^*\|
\le \frac8\pi\|\lambda\|_\infty.
\]
\end{proof}

The following result demonstrates that some rearrangement is required to get a bounded constant $K$ in \eqref{eq:thebound}.
Although this sort of calculation (to get a lower bound for the norm of the upper triangular projection)
was also made in~\cite{ACN92} for upper triangular Toeplitz matrices, these were not of the form $T_\lambda$
for $\lambda\in\Vc_n$
(see Remark~\ref{rem:Tlambdas}).

\begin{prop}\label{prop:lowerbd}
For $\lambda=(\underset{n}{\underbrace{1,\cdots,1}},\underset{n}{\underbrace{-1,\cdots,-1}})$,
we have $\|T_\lambda\|\ge\frac1\pi\log(n)-\frac3{2\pi}$.
\end{prop}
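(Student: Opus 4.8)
The plan is to make the matrix $T_\lambda$ completely explicit for this particular $\lambda$ and then test it against the flat unit vector. Writing the matrix size as $2n$ and $\omega=\omega_{2n}=e^{\pi\i/n}$ in~\eqref{eq:td}, I would split $\sum_{p=1}^{2n}\lambda_p\omega^{d(p-1)}$ into the part over $p\le n$ (coefficient $+1$) and the part over $p>n$ (coefficient $-1$), sum the two geometric progressions, and use $\omega^{dn}=(-1)^d$. This gives $t_d=0$ when $d$ is even, while for $d$ odd, setting $\theta=\pi d/(2n)$,
\[
t_d=\frac{-2}{n(\omega^d-1)}=\frac{\i e^{-\i\theta}}{n\sin\theta}=\frac1n\bigl(1+\i\cot\theta\bigr),
\]
so that $\im(t_d)=\tfrac1n\cot(\pi d/(2n))$ for $d$ odd and $\im(t_d)=0$ for $d$ even.

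The lower bound then comes from testing $T_\lambda$ against $\xi=\tfrac1{\sqrt{2n}}(1,\dots,1)^t$. Since $\|\xi\|=1$ we have $\|T_\lambda\|\ge|\langle T_\lambda\xi,\xi\rangle|\ge\im\langle T_\lambda\xi,\xi\rangle$, and counting the $2n-d$ pairs $(i,j)$ with $j-i=d$ gives $\langle T_\lambda\xi,\xi\rangle=\tfrac1{2n}\sum_{d=1}^{2n-1}(2n-d)t_d$, hence
\[
\im\langle T_\lambda\xi,\xi\rangle=\frac1{2n^2}\sum_{\substack{1\le d\le 2n-1\\ d\ \text{odd}}}(2n-d)\cot\Bigl(\tfrac{\pi d}{2n}\Bigr).
\]
Now I would use the antisymmetry $\cot\bigl(\pi(2n-d)/(2n)\bigr)=-\cot(\pi d/(2n))$: pairing each odd $d<n$ with $2n-d$ (the middle term $d=n$ occurs only for $n$ odd and contributes $0$ since $\cot(\pi/2)=0$) collapses this to the sum of positive terms
\[
\im\langle T_\lambda\xi,\xi\rangle=\frac1{n^2}\sum_{\substack{1\le d<n\\ d\ \text{odd}}}(n-d)\cot\Bigl(\tfrac{\pi d}{2n}\Bigr).
\]

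To finish I would invoke Lemma~\ref{lemma2}: for $0<d/(2n)<\tfrac12$ it gives $\cot(\pi d/(2n))=\tfrac{2n}{\pi d}-f_1(d/(2n))\ge\tfrac{2n}{\pi d}-\tfrac2\pi>0$. Multiplying by $n-d>0$, using $(n-d)\bigl(\tfrac{2n}{\pi d}-\tfrac2\pi\bigr)=\tfrac{2(n-d)^2}{\pi d}$ and $(n-d)^2/d=n^2/d-2n+d$, one obtains
\[
\im\langle T_\lambda\xi,\xi\rangle\ \ge\ \frac2\pi\sum_{d\in O}\frac1d\ -\ \frac{4}{\pi n}\,|O|\ +\ \frac{2}{\pi n^2}\sum_{d\in O}d,\qquad O:=\{d:1\le d<n,\ d\ \text{odd}\}.
\]
Evaluating $|O|$ and $\sum_{d\in O}d$ separately for the two parities, the last two terms equal exactly $-\tfrac3{2\pi}$ when $n$ is even and are $\ge-\tfrac3{2\pi}$ when $n$ is odd (the latter reduces to $4m+3\ge0$ with $n=2m+1$). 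For the first term, comparing $\tfrac1{2k-1}$ with $\int_{2k-1}^{2k+1}\tfrac{dx}{2x}$ and summing over $k$ gives $\sum_{d\in O}\tfrac1d\ge\tfrac12\log n$. Together these yield $\|T_\lambda\|\ge\im\langle T_\lambda\xi,\xi\rangle\ge\tfrac1\pi\log n-\tfrac3{2\pi}$.

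I do not expect a serious obstacle once the flat test vector is chosen; the computation is essentially forced. The one place demanding care is pinning the additive constant $-\tfrac3{2\pi}$ exactly, i.e.\ the parity bookkeeping in the penultimate display together with the elementary lower bound on the odd harmonic partial sum, and I would in particular double-check whether a marginally sharper constant is available via a better estimate of $f_1(d/(2n))$.
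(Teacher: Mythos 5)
Your proof is correct and is essentially the paper's own argument: same explicit formula for $t_d$ (zero for $d$ even, $\tfrac1n(1+\i\cot(\pi d/(2n)))$ for $d$ odd), same flat test vector, same reduction of the imaginary part to a sum over odd $d<n$ by the antisymmetry of the cotangent, same appeal to Lemma~\ref{lemma2} for $\cot(\pi x)\ge\tfrac1{\pi x}-\tfrac2\pi$, and the same elementary bounds on the odd harmonic sum and on $\sum_{d\in O}d$. The only cosmetic differences are the algebraic route through $(n-d)^2/d=n^2/d-2n+d$ (the paper expands directly but reaches the identical three-term expression) and your explicit note that the $d=n$ term vanishes when $n$ is odd, which the paper leaves implicit.
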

\begin{proof}
For the given $\lambda$,
\begin{align*}
t_d&=\frac{1}{n}\sum_{k=1}^n\omega_{2n}^{d(k-1)}-\frac{1}{n}\sum_{k=n+1}^{2n}\omega_{2n}^{d(k-1)}
=\frac1n\cdot\frac{1-\omega_{2n}^{nd}}{1-\omega_{2n}^d}\\
&=\frac1n\cdot\frac{1-(-1)^d}{1-\omega_{2n}^d}.
\end{align*}
Hence,
\begin{align}
\label{eq:tdgrowth}
t_d=\begin{cases}
0 & \text{ if } d \text{ is even}\\
\frac1n\left(1+\i\cot\left(\frac{\pi d}{2n}\right)\right) & \text{ if } d \text{ is odd}.
\end{cases}
\end{align}
We will estimate from below the quadratic form of $T_\lambda$ on the vector $g=\frac{1}{\sqrt{2n}}(\underset{2n}{\underbrace{1,1,\dots,1}})$.
\begin{align}
\label{eq:quadform}
\left<T_\lambda g,g\right>
=\left<\left(\sum_{k=1}^{2n-1}t_k,\sum_{k=1}^{2n-2}t_k,\dots,t_1+t_2,t_1,0\right),g\right>
=\frac{1}{2n}\sum_{k=1}^{2n-1}(2n-k)t_k.
\end{align}
From \eqref{eq:tdgrowth} and \eqref{eq:quadform}, we obtain
\begin{align*}
\re\left<T_\lambda g,g\right>=\frac{1}{2n^2}\sum_{\substack{1\le j\leq 2n-1 \\ j\text{ odd}}}(2n-j)
=\frac{1}{2},
\end{align*}
\begin{align*}
\im\left<T_\lambda g,g\right>&=\frac{1}{2n^2}\sum_{\substack{1\le j\leq 2n-1 \\ j\text{ odd}}}(2n-j)
\cot\left(\frac{\pi j}{2n}\right) \displaybreak[2]\\
&=\frac{1}{2n^2}\bigg(\sum_{\substack{1\le j\leq n-1 \\ j\text{ odd}}}+\sum_{\substack{n+1\le j\leq 2n-1 \\ j\text{ odd}}}\bigg)(2n-j)\cot\left(\frac{\pi j}{2n}\right) \displaybreak[2]\\
&=\frac{1}{2n^2}\sum_{\substack{1\le j\leq n-1 \\ j\text{ odd}}}(2n-2j)\cot\left(\frac{\pi j}{2n}\right).
\end{align*}
Application of Lemma \ref{lemma2} implies
\begin{align*}
\im\left<T_\lambda g,g\right>&
=\frac{1}{\pi n}\sum_{\substack{1\le j\leq n-1 \\ j\text{ odd}}}\frac{2n-2j}{j}
-\frac{1}{2 n^2}\sum_{\substack{1\le j\leq n-1 \\ j\text{ odd}}}(2n-2j)f_1\left(\frac{j}{2n}\right) \\ 
&\ge\frac{1}{\pi n}\sum_{\substack{1\le j\leq n-1 \\ j\text{ odd}}}\frac{2n-2j}{j}
-\frac{1}{\pi n^2}\sum_{\substack{1\le j\leq n-1 \\ j\text{ odd}}}(2n-2j) \\
&=\frac2\pi\sum_{\substack{1\le j\leq n-1 \\ j\text{ odd}}}\frac1j
-\frac4{\pi n}\sum_{\substack{1\le j\leq n-1 \\ j\text{ odd}}}1
+\frac{2}{\pi n^2}\sum_{\substack{1\le j\leq n-1 \\ j\text{ odd}}}j
\end{align*}
and the standard computations
\[
\sum_{\substack{1\le j\le r\\j\text{ odd}}}\frac1j>\frac12\log(r+1),
\qquad
\sum_{\substack{1\le j\le r\\ j\text{ odd}}}j=\left\lfloor\frac{r+1}2\right\rfloor^2
\]
(both for arbitrary $r\in\Nats$, where $\lfloor\cdot\rfloor$ is the floor function)
yield
\[
\im\left<T_\lambda g,g\right>
>\frac1\pi\log(n)-\frac3{2\pi}.
\]
\end{proof}

We will now consider the conjugation with Fourier matrices with a view to taking inductive limits
of matrix algebras.
Let $(e_{ij}^{(n)})_{1\le i,j\le n}$ be the standard system of matrix units for $M_n$.
Let $\Theta_n:M_n\to\UT_n^{(1)}$ be the projection given by
\[
\Theta_n(e_{ij}^{(n)})=\begin{cases}e_{ij}^{(n)}&\text{if }i<j, \\0&\text{otherwise.}\end{cases}
\]
Let $\alpha_n:M_n\to M_n$ be the inner automorphism $\alpha_n(A)=U_n^*AU_n$,
where $U_n$ is the Fourier matrix as described above equation~\eqref{eq:number}.
Thus, for $\lambda=(\lambda_1,\ldots,\lambda_n)\in D_n$ with $\sum_{j=1}^n\lambda_j=0$,
we have $T_\lambda=\Theta_n(\alpha_n(\lambda))\in\UT_n^{(1)}$.

Let $m,n\in\Nats$ and consider the inclusion
$\gamma_{m,n}:M_m\to M_{mn}$ given by
\[
\gamma=\gamma_{m,n}:e_{ij}^{(m)}\mapsto\sum_{k=1}^ne_{n(i-1)+k,n(j-1)+k}^{(mn)}.
\]
Under the common identification of $M_{mn}$ with $M_m\otimes M_n$,  $\gamma_{m,n}(x)$ is identified with $x\otimes I_n$.

Let $\beta=\beta_{m,n}=\alpha_{mn}^{-1}\circ\gamma\circ\alpha_{m}:M_m\to M_{mn}$.
While $\beta(e_{ij}^{(m)})$ is not very pretty to describe for general $i$ and $j$,
as seen in the following lemma, the restriction of $\beta$ to the diagonal subalgebra $D_m$ is rather nice;
it is the flip of the usual tensor product embedding.

\begin{lemma}\label{lem:commdiag}
The map $\gamma_{m,n}$ sends Toeplitz matrices into Toeplitz matrices.
For each $p\in\{0,1,\ldots,m\}$ we have
\begin{equation}\label{eq:gammaUT}
\gamma_{m,n}(\UT_m^{(p)})\subseteq\UT_{mn}^{(pn)},\qquad\gamma_{m,n}(\UTTM_m^{(p)})\subseteq\UTTM_{mn}^{(pn)}.
\end{equation}
Thus, we have the commuting diagram
\begin{equation}\label{eq:gambeta}
\xymatrix{
\UT_m^{(1)} \ar[r]^{\gamma} & \UT_{mn}^{(1)} \\
M_m \ar[r]^\gamma \ar[u]^{\Theta_m} & M_{mn} \ar[u]_{\Theta_{mn}} \\
M_m \ar[r]^\beta \ar[u]^{\alpha_m} & M_{mn} \ar[u]_{\alpha_{mn}} \\
\rule{0ex}{2.5ex}D_m \ar[r]^{\beta} \ar@{^{(}->}[u] & \rule{0ex}{2.5ex}D_{mn} \ar@{^{(}->}[u]
}
\end{equation}
where the top and bottom row arrows indicate the restriction of $\gamma$ to $\UT_m^{(1)}$ and, respectively, $\beta$ to $D_m$.
Finally, for $e_{ii}^{(m)}\in D_m$, we have
\begin{equation}\label{eq:betaeii}
\beta(e_{ll}^{(m)})=\sum_{k=1}^ne_{m(k-1)+l,m(k-1)+l}^{(mn)}\,.
\end{equation}
\end{lemma}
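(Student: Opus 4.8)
The plan is to dispatch the structural claims about $\gamma_{m,n}$ first, read off commutativity of all but the bottom square of~\eqref{eq:gambeta} essentially for free, and then reduce the whole statement to the single formula~\eqref{eq:betaeii}, which I would prove by a direct root-of-unity computation. For the structural part, write indices $A,B\in\{1,\dots,mn\}$ uniquely as $A-1=n(i-1)+(r-1)$ and $B-1=n(j-1)+(s-1)$ with $i,j\in\{1,\dots,m\}$ and $r,s\in\{1,\dots,n\}$; then, directly from the definition, the $(A,B)$ entry of $\gamma_{m,n}(x)$ is $x_{ij}$ if $r=s$ and is $0$ if $r\ne s$. If $x$ is Toeplitz, say $x_{ij}=c_{i-j}$, this entry is nonzero only when $n\mid(A-B)$, and then it equals $c_{(A-B)/n}$, so $\gamma_{m,n}(x)$ is Toeplitz; if $x\in\UT_m^{(p)}$, a nonzero entry forces $j\ge i+p$, i.e.\ $B-A=n(j-i)\ge np$, which is exactly membership in $\UT_{mn}^{(pn)}$. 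Intersecting the two statements gives~\eqref{eq:gammaUT}, and in particular $\gamma_{m,n}(\UT_m^{(1)})\subseteq\UT_{mn}^{(1)}$, so the top arrow of~\eqref{eq:gambeta} is well defined.

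For the diagram, the middle square commutes by the very definition $\beta=\alpha_{mn}^{-1}\circ\gamma\circ\alpha_m$. To check the top square, $\Theta_{mn}\circ\gamma=\gamma\circ\Theta_m$, I would evaluate on matrix units $e_{ij}^{(m)}$: when $i<j$, every summand $e^{(mn)}_{n(i-1)+k,\,n(j-1)+k}$ of $\gamma(e_{ij}^{(m)})$ is strictly above the diagonal, so both composites return $\gamma(e_{ij}^{(m)})$; when $i\ge j$, every such summand lies on or below the diagonal, so both composites vanish. The bottom square asserts exactly that $\beta(D_m)\subseteq D_{mn}$, which is immediate from~\eqref{eq:betaeii} since the $e^{(m)}_{ll}$ span $D_m$. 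So it remains only to prove~\eqref{eq:betaeii}.

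To prove~\eqref{eq:betaeii}, observe that $\alpha_m(e^{(m)}_{ll})=U_m^*e^{(m)}_{ll}U_m$ is the rank-one orthogonal projection onto the unit vector $g_l:=U_m^*e^{(m)}_l$, which, since $U_m$ is symmetric, is the complex conjugate of the $l$th column of $U_m$. Under the identification $M_{mn}\cong M_m\otimes M_n$ compatible with $\gamma$ (so that $e_i^{(m)}\otimes e_r^{(n)}$ corresponds to $e^{(mn)}_{n(i-1)+r}$), we get $\gamma(\alpha_m(e^{(m)}_{ll}))=g_lg_l^*\otimes I_n$, the orthogonal projection onto $g_l\otimes\Cpx^n$; hence $\beta(e^{(m)}_{ll})=U_{mn}\,(g_lg_l^*\otimes I_n)\,U_{mn}^*$ is the orthogonal projection onto $U_{mn}(g_l\otimes\Cpx^n)$. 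Thus it suffices to show that the vectors $U_{mn}^*e^{(mn)}_{m(k-1)+l}$ for $k=1,\dots,n$ --- which are the complex conjugates of the corresponding columns of $U_{mn}$ --- form a basis of $g_l\otimes\Cpx^n$.

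Expanding one of these conjugated columns in the tensor basis and writing the running index $q$ as $q-1=n(i-1)+(r-1)$, one uses $\omega_{mn}^n=\omega_m$, $\omega_{mn}^m=\omega_n$ and $\omega_m^m=1$ to see that the coefficient $\omega_{mn}^{-(m(k-1)+l-1)(q-1)}$ factors as $\omega_m^{-(l-1)(i-1)}$ (independent of $k$) times $\bigl(\omega_n^{-(k-1)}\omega_{mn}^{-(l-1)}\bigr)^{r-1}$; therefore
\[
U_{mn}^*e^{(mn)}_{m(k-1)+l}=g_l\otimes h_{k,l},\qquad h_{k,l}:=\tfrac1{\sqrt n}\sum_{r=1}^n\bigl(\omega_n^{-(k-1)}\omega_{mn}^{-(l-1)}\bigr)^{r-1}e_r^{(n)}.
\]
Since the $n$ numbers $\omega_n^{-(k-1)}\omega_{mn}^{-(l-1)}$, $k=1,\dots,n$, are distinct, the Vandermonde determinant shows the $h_{k,l}$ are linearly independent and hence span $\Cpx^n$; so $\lspan\{U_{mn}^*e^{(mn)}_{m(k-1)+l}:1\le k\le n\}=g_l\otimes\Cpx^n$, as required, and~\eqref{eq:betaeii} follows. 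The only genuine computation is the factorization of the coefficient, and the main obstacle there is purely bookkeeping: keeping straight the ``base-$n$'' splitting $q-1=n(i-1)+(r-1)$ that $\gamma$ imposes, the ``base-$m$'' index $m(k-1)+l$ that appears in~\eqref{eq:betaeii}, and the exponents of $U_{mn}$, while tracking the roots of unity through $\omega_{mn}^n=\omega_m$ and $\omega_{mn}^m=\omega_n$.
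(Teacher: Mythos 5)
Your proof is correct, and it takes a genuinely different route from the paper's for the key formula~\eqref{eq:betaeii}. The paper verifies the equivalent identity $\gamma\circ\alpha_m(e_{ll}^{(m)})=\alpha_{mn}\bigl(\sum_{k}e^{(mn)}_{m(k-1)+l,m(k-1)+l}\bigr)$ by a direct entrywise matrix computation: expand both sides over matrix units $e_{ab}^{(mn)}$, and observe that the geometric sum $\tfrac1n\sum_{k}\omega_n^{(k-1)(b-a)}$ acts as the indicator of $n\mid(b-a)$, which matches the support pattern of $\gamma\circ\alpha_m(e_{ll}^{(m)})$. You instead argue at the level of projections and ranges: $\alpha_m(e_{ll}^{(m)})$ is the rank-one projection onto $g_l=U_m^*e_l^{(m)}$, so $\gamma(\alpha_m(e_{ll}^{(m)}))=g_lg_l^*\otimes I_n$ projects onto $g_l\otimes\Cpx^n$, and the identity reduces to the statement that the vectors $U_{mn}^*e^{(mn)}_{m(k-1)+l}$ span that subspace; the factorization $U_{mn}^*e^{(mn)}_{m(k-1)+l}=g_l\otimes h_{k,l}$ together with distinctness of the $n$ numbers $\omega_n^{-(k-1)}\omega_{mn}^{-(l-1)}$ (Vandermonde) then finishes it. Both routes hinge on the same root-of-unity bookkeeping via $\omega_{mn}^n=\omega_m$, $\omega_{mn}^m=\omega_n$, but your version makes the rank-$n$ projection structure and the tensor decomposition explicit, while the paper's is shorter because the exponential-sum indicator does all the work in one step. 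You also spell out the structural claims (Toeplitz preservation, the inclusions~\eqref{eq:gammaUT}, and the commutativity of the top and middle squares) in detail where the paper just says they are ``easy to verify''; that is fine and accurate.
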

\begin{proof}
The inclusions~\eqref{eq:gammaUT} are easy to verify, and
we need only show~\eqref{eq:betaeii}, which we do by verifying
\begin{equation}\label{eq:gamalph}
\gamma\circ\alpha_m(e_{ll}^{(m)})=\alpha_{mn}(\sum_{k=1}^ne_{m(k-1)+l,m(k-1)+l}^{(mn)}).
\end{equation}
Using~\eqref{eq:Dff}, we have
\begin{equation}\label{eq:ijksum}
e_{ll}^{(m)}\overset{\alpha_m}\mapsto\frac1m\sum_{1\le i,j\le m}\omega_m^{(l-1)(j-i)}e_{ij}^{(m)}
\overset\gamma\mapsto\frac1m\sum_{\substack{1\le i,j\le m\\ 1\le k\le n}}\omega_m^{(l-1)(j-i)}
 e_{n(i-1)+k,n(j-1)+k}^{(mn)}\,,
\end{equation}
while
\begin{multline}\label{eq:absum}
\alpha_{mn}(\sum_{k=1}^ne_{m(k-1)+l,m(k-1)+l}^{(mn)})=
\frac1{mn}\sum_{\substack{1\le k\le n\\ 1\le a,b\le mn}}\omega_{mn}^{(m(k-1)+l-1)(b-a)}e_{ab}^{(mn)} \\
=\frac1m\sum_{1\le a,b\le mn}\omega_{mn}^{(l-1)(b-a)}e_{ab}^{(mn)}\left(\frac1n\sum_{1\le k\le n}\omega_n^{(k-1)(b-a)}\right).
\end{multline}
But
\[
\frac1n\sum_{1\le k\le n}\omega_n^{(k-1)(b-a)}=
\begin{cases}1,&\text{if }n\text{ divides }b-a \\ 0,&\text{if }n\text{ does not divide }b-a,\end{cases}
\]
and $n$ divides $b-a$ if and only if we have $a=n(i-1)+k$ and $b=n(j-1)+k$ for some $1\le i,j\le m$ and some $1\le k\le n$,
so the nested summation~\eqref{eq:absum} equals the right--most summation in~\eqref{eq:ijksum},
and~\eqref{eq:gamalph} is verified.
\end{proof}

For applications in the setting of inductive limits of maps as in the diagram~\eqref{eq:gambeta},
we will want
a version of Theorem~\ref{thm:partial} but for elements of $D_{mn}$ that are orthogonal to $\beta(D_m)$ and taking only
reorderings of diagonal entries that fix $\beta(D_m)$.
This is provided by the next result in the case $n=2$.

\begin{prop}\label{prop:Dm2}
Fix $m\in\{2,3,\ldots\}$ and let
$\lambda^{(i)}_j\in\Reals$ for $1\le i\le m$ and $1\le j\le 2$ satisfy $\lambda^{(i)}_1+\lambda^{(i)}_2=0$ for all $i$.
Thus,
\[
\lambda:=(\lambda^{(1)}_1,\lambda^{(2)}_1,\ldots,\lambda^{(m)}_1,
\lambda^{(1)}_2,\lambda^{(2)}_2,\ldots,\lambda^{(m)}_2)\in D_{2m}\ominus\beta_{m,2}(D_m).
\]
Then there are permutations $\sigma_1,\ldots,\sigma_m$ of $\{1,2\}$
such that, considering the reordering
\[
\kappa=
(\lambda^{(1)}_{\sigma_1(1)},\lambda^{(2)}_{\sigma_2(1)},\ldots,\lambda^{(m)}_{\sigma_m(1)},
\lambda^{(1)}_{\sigma_1(2)},\lambda^{(2)}_{\sigma_2(2)},\ldots,\lambda^{(m)}_{\sigma_m(2)}),
\]
of $\lambda$, we have
$\|T_\kappa\|\le C\|\lambda\|_\infty$,
where $C=\frac12+\frac{12}\pi$.
\end{prop}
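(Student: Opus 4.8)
The plan is to follow the same structure as the proof of Theorem~\ref{thm:partial}, splitting $T_\kappa$ into its real and imaginary parts. Because the eigenvalues of $T_\kappa + T_\kappa^*$ are exactly the entries of $\kappa$ (a reordering of $\lambda$), we get $\|\RealPart T_\kappa\| = \tfrac12\|\lambda\|_\infty$ regardless of which permutations $\sigma_i$ we choose. So the entire burden falls on estimating $\|\i(T_\kappa - T_\kappa^*)\|$, and we want to choose the $\sigma_i$ so that this is at most $\tfrac{24}{\pi}\|\lambda\|_\infty$. Using the formula derived in the proof of Theorem~\ref{thm:partial} with $n=2m$,
\[
\|T_\kappa - T_\kappa^*\| = \max_{1\le l\le 2m}\big|(\kappa\circ\rho_{2m}^{l-1})\cdot\mu\big|,
\]
where $\mu = (\mu_1,\ldots,\mu_{2m})$ is the eigenvalue sequence of $H_{2m}$, which by Remark~\ref{rem:muk} becomes monotone with all entries in $(-\tfrac2\pi,\tfrac2\pi)$ after a suitable rotation $\rho_{2m}^{\,j_0}$; since we range over all rotations anyway, we may assume $\mu$ itself is monotone with $|\mu_k| < \tfrac2\pi$.

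**Next I would** invoke Lemma~\ref{lem:DIrichlet}: for each rotation of $\kappa$ we bound $|(\kappa\circ\rho_{2m}^{l-1})\cdot\mu|$ by $M_l \,|\mu_{2m} - \mu_1| \le \tfrac4\pi M_l$, where $M_l$ is the maximum absolute value of a partial sum of $\kappa\circ\rho_{2m}^{l-1}$. The key structural observation is that $\kappa$ consists of a first block $(\lambda^{(1)}_{\sigma_1(1)},\ldots,\lambda^{(m)}_{\sigma_m(1)})$ followed by a second block $(\lambda^{(1)}_{\sigma_1(2)},\ldots,\lambda^{(m)}_{\sigma_m(2)})$, and since $\lambda^{(i)}_2 = -\lambda^{(i)}_1$ the second block is the negative of the first in the same order. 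Choosing each $\sigma_i$ so that $\lambda^{(i)}_{\sigma_i(1)} = -|\lambda^{(i)}_1|$, i.e. so that the first block has all entries $\le 0$, makes every partial sum of the first block lie in $[-\,m\|\lambda\|_\infty, 0]$. That is too weak by itself; the better idea is to choose the signs $\sigma_i$ greedily so that the running partial sums of the first block stay within $[-\|\lambda\|_\infty, \|\lambda\|_\infty]$ — possible because each increment $\pm|\lambda^{(i)}_1|$ has absolute value $\le \|\lambda\|_\infty$, exactly as in the single-sequence rearrangement used in Theorem~\ref{thm:partial}. Then every partial sum of the first block is bounded by $\|\lambda\|_\infty$; partial sums reaching into the second block equal (sum of all of block one) $+$ (partial sum of the negated-and-reordered block one up to some point), and since the full sum of block one is itself $\le\|\lambda\|_\infty$ in absolute value while the partial sums of the second block are each $\le \|\lambda\|_\infty$ plus this total, one gets a bound of $3\|\lambda\|_\infty$ for any partial sum of $\kappa$, hence $6\|\lambda\|_\infty$ for any partial sum of any rotation $\kappa\circ\rho_{2m}^{l-1}$.

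**Combining**, $\|T_\kappa - T_\kappa^*\| \le \tfrac4\pi \cdot 6\|\lambda\|_\infty = \tfrac{24}\pi\|\lambda\|_\infty$, and therefore
\[
\|T_\kappa\| \le \|\RealPart T_\kappa\| + \tfrac12\|\i(T_\kappa-T_\kappa^*)\| \le \Big(\tfrac12 + \tfrac{12}\pi\Big)\|\lambda\|_\infty,
\]
which is the asserted bound with $C = \tfrac12 + \tfrac{12}{\pi}$.

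**The main obstacle** I anticipate is pinning down the partial-sum bound for $\kappa$ and all its rotations with the right constant: one has to be careful that the greedy sign choice for block one actually controls partial sums that straddle the boundary between the two blocks (where the second block is the reordered negative of the first), and that passing to an arbitrary rotation only costs a further factor of $2$. Getting the constant exactly $6$ (rather than something larger) for the rotated partial sums is the delicate bookkeeping step; everything else is a direct transcription of the proof of Theorem~\ref{thm:partial} with $n$ replaced by $2m$. It is also worth double-checking that the hypothesis $\lambda \in D_{2m}\ominus\beta_{m,2}(D_m)$, i.e. $\lambda^{(i)}_1 + \lambda^{(i)}_2 = 0$ for each $i$, is exactly what makes the "second block $=$ negative of first block" structure work and is preserved by the permutations $\sigma_i$, so the reordered $\kappa$ still lies in $D_{2m}\ominus\beta_{m,2}(D_m)$.
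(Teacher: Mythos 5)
Your proof follows the same strategy as the paper's: reduce via the proof of Theorem~\ref{thm:partial} to bounding the maximum absolute value $R$ of partial sums of all rotations of $\kappa$, and choose the signs $\sigma_i$ greedily so the running sums of the first block stay within $[-\|\lambda\|_\infty,\|\lambda\|_\infty]$. Your intermediate bound $R\le 6\|\lambda\|_\infty$ (partial sums of $\kappa$ at most $3\|\lambda\|_\infty$, doubled when passing to rotations) is looser than the paper's, which bounds the cyclic window sums directly at $R\le 3\|\lambda\|_\infty$ by noting that any window sum is a signed combination of $P_m$ and at most two partial sums $P_k$ of the first block; nevertheless you arrive at the stated constant $C=\tfrac12+\tfrac{12}{\pi}$, so the proof is correct.
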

\begin{proof}
Proceeding as in the proof of Theorem~\ref{thm:partial} we have
\[
\|T_\kappa\|\le \frac12\|\lambda\|_\infty+\frac4\pi R,
\]
where
\begin{equation}\label{eq:maxsum}
R=\max_{\substack{1\le k\le2m \\1\le p\le2m}}\left|\sum_{j=1}^k\kappa\circ\rho_{2m}^p(j)\right|
\end{equation}
is the maximum absolute value of all partial sums of all rotations of $\kappa$,
i.e., where $\rho_{2m}$ is the full cycle permutation $k\mapsto k-1$ (modulo $2m$) of $\{1,\ldots,2m\}$.
Since $\lambda^{(i)}_1=-\lambda^{(i)}_2$ for all $i$, we may choose $\sigma_1$ arbitrarily and then choose $\sigma_2,\ldots,\sigma_m$
recursively so that the sign of $\lambda^{(k)}_{\sigma_k(1)}$ is the opposite of the sign of
$\sum_{i=1}^{k-1}\lambda^{(i)}_{\sigma_i(1)}$.
This ensures
\[
\left|\sum_{i=1}^k\lambda^{(i)}_{\sigma_i(1)}\right|\le\|\lambda\|_\infty
\]
for all $k\in\{1\ldots,m\}$.
This, in turn, implies
\[
\left|\sum_{i=k}^{l}\lambda^{(i)}_{\sigma_i(j)}\right|\le2\|\lambda\|_\infty
\]
for all $1\le k\le l\le m$ and all $j\in\{1,2\}$.
Together, these estimates yield $R\le3\|\lambda\|_\infty$.
\end{proof}

\section{Applications using finite type I von Neumann algebras}
\label{sec:vN}

This section is concerned with applications of Theorem~\ref{thm:partial}
to constructing quasinilpotent and related elements
in finite type I von Neumann algebras, and also constructions in II$_1$ factors that result from this.
Throughout, $K$ will denote the constant from Theorem~\ref{thm:partial}.

\begin{prop}\label{obs:prod}
Let
$\Mcal_j\subseteq\Bc(\HEu_j)$ be a von Neumann algebra ($j\in J$).
Let $\Mcal=\prod_{j\in J}\Mcal_j$ be the direct product of von Neumann algebras, so that
$\Mcal\subseteq\Bc(\bigoplus_{j\in J}\HEu_j)$ in the canonical way.
Let $x=(x_j)_{j\in J}\in\Mcal$.
Suppose each  $x_j$ is quasinilpotent.
\begin{enumerate}[(i)]
\item Then $x$ is s.o.t.--quasinilpotent.
\item The element $x$ is quasinilpotent if and only if
\begin{equation}\label{eq:xqn}
\lim_{n\to\infty}\left(\sup_{j\in J}\|x_j^n\|\right)^{1/n}=0.
\end{equation}
\item If each $x_j$ is nilpotent with $x_j^{n(j)}=0$ for $n(j)\in\Nats$
and if
\[
\lim_{N\to\infty}\left(\sup\{\|x_j\|\mid j\in J,\, n(j)>N\}\right)=0,
\]
then $x$ is quasinilpotent.
\end{enumerate}
\end{prop}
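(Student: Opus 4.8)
The plan is to prove the three items essentially by elementary spectral and operator-norm estimates, working coordinatewise with the direct product structure. Recall that for $x = (x_j)_{j\in J} \in \Mcal = \prod_j \Mcal_j$ the norm is $\|x\| = \sup_j \|x_j\|$, and more generally $\|f(x)\| = \sup_j \|f(x_j)\|$ for any noncommutative polynomial $f$; in particular $\|x^n\| = \sup_j \|x_j^n\|$. Also, for each $j$ quasinilpotence of $x_j$ means $\lim_{n\to\infty}\|x_j^n\|^{1/n} = 0$.

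For item (i), I would invoke the characterization of s.o.t.--quasinilpotency via $\text{s.o.t.--}\lim_n ((x^*)^n x^n)^{1/2n} = 0$. It suffices to check that this strong-operator limit is $0$ when tested against any vector $\xi = (\xi_j)_j \in \bigoplus_j \HEu_j$. Since $((x^*)^n x^n)^{1/2n}$ acts diagonally as $(((x_j^*)^n x_j^n)^{1/2n})_j$, and for each fixed $j$ this converges to $0$ in norm (because $x_j$ is quasinilpotent, hence s.o.t.--quasinilpotent, and the ambient algebra $\Bc(\HEu_j)$ is finite-dimensional-free but the norm of $((x_j^*)^nx_j^n)^{1/2n}$ equals $\|x_j^n\|^{1/n} \to 0$ since $\|(y^*y)^{1/2}\| = \|y\|$), one gets $\|((x^*)^nx^n)^{1/2n}\xi\|^2 = \sum_j \|((x_j^*)^nx_j^n)^{1/2n}\xi_j\|^2$, which tends to $0$ by dominated convergence: each summand tends to $0$ and is dominated by $\|x_j^n\|^{2/n}\|\xi_j\|^2 \le (\sup_j\|x_j\|)^{\text{?}}$ — here one must be slightly careful, so instead I would bound $\|((x_j^*)^nx_j^n)^{1/2n}\| = \|x_j^n\|^{1/n} \le \|x_j\| \le \|x\|$ for $n\ge 1$, giving the uniform domination $\|((x_j^*)^nx_j^n)^{1/2n}\xi_j\|^2 \le \|x\|^2\|\xi_j\|^2$ with $\sum_j\|\xi_j\|^2 < \infty$, and conclude by dominated convergence.

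For item (ii), the spectrum of $x$ in $\Mcal$ is related to the spectra of the $x_j$ by $\sigma(x) = \overline{\bigcup_j \sigma(x_j)}$ (closure in $\Cpx$), so since each $\sigma(x_j) = \{0\}$ we get $\sigma(x) \subseteq \{0\}$ automatically — wait, that would make $x$ always quasinilpotent, which is false, so the subtlety is that for an infinite product $\sigma(x)$ can be strictly larger than the closure of the union; the correct statement is that $\sigma(x) = \{\mu : \inf_j \|(x_j - \mu)^{-1}\|^{-1} = 0\}$, reflecting lack of uniform invertibility. The clean route: $x$ is quasinilpotent iff $\lim_n \|x^n\|^{1/n} = 0$ iff $\lim_n (\sup_j \|x_j^n\|)^{1/n} = 0$, which is precisely \eqref{eq:xqn}. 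So item (ii) is almost immediate from $\|x^n\| = \sup_j\|x_j^n\|$ together with the spectral radius formula $r(x) = \lim_n\|x^n\|^{1/n}$; I would just spell out that $x$ quasinilpotent $\iff r(x) = 0 \iff$ \eqref{eq:xqn}.

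For item (iii), the goal is to verify \eqref{eq:xqn} under the hypotheses. Fix $\eps > 0$; set $M = \sup_j \|x_j\|$ (finite since $\|x\| < \infty$). By hypothesis choose $N$ so that $\|x_j\| < \eps$ whenever $n(j) > N$. Then for $n > N$: if $n(j) > N$ we do not immediately get smallness of $\|x_j^n\|$ for small exponents, so split on whether $n \ge n(j)$ or not. If $n \ge n(j)$ then $x_j^n = 0$. If $n < n(j)$, then necessarily $n(j) > n > N$, so $\|x_j\| < \eps$, whence $\|x_j^n\| \le \|x_j\|^n < \eps^n$. Therefore for all $n > N$ we have $\sup_j \|x_j^n\| \le \eps^n$, so $(\sup_j\|x_j^n\|)^{1/n} \le \eps$, giving $\lim_n (\sup_j \|x_j^n\|)^{1/n} \le \eps$; as $\eps$ was arbitrary, \eqref{eq:xqn} holds and $x$ is quasinilpotent by item (ii). The main obstacle, such as it is, is organizing the case split in (iii) correctly and being careful in (i) about which domination to use for the dominated-convergence argument — there is no deep difficulty, but one must avoid the tempting but incorrect claim that the spectrum of an infinite product is just the closure of the union of the spectra.

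\begin{proof}
Recall that in $\Mcal = \prod_{j\in J}\Mcal_j$ the operations are coordinatewise and the norm is $\|y\| = \sup_j\|y_j\|$; in particular $\|x^n\| = \sup_j\|x_j^n\|$ for every $n$, and $((x^*)^nx^n)^{1/2n}$ is the coordinatewise operator $\big(((x_j^*)^nx_j^n)^{1/2n}\big)_{j\in J}$.

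(i) For each $n\ge 1$ and each $j$ we have $\|((x_j^*)^nx_j^n)^{1/2n}\| = \|x_j^n\|^{1/n}\le\|x_j\|\le\|x\|$. Fix $\xi=(\xi_j)_{j\in J}\in\bigoplus_j\HEu_j$, so that $\sum_j\|\xi_j\|^2<\infty$. Then
\[
\big\|((x^*)^nx^n)^{1/2n}\xi\big\|^2 = \sum_{j\in J}\big\|((x_j^*)^nx_j^n)^{1/2n}\xi_j\big\|^2.
\]
Each summand is dominated by $\|x\|^2\|\xi_j\|^2$, and for each fixed $j$ it tends to $0$ as $n\to\infty$, because $\|((x_j^*)^nx_j^n)^{1/2n}\xi_j\|\le\|x_j^n\|^{1/n}\|\xi_j\|$ and $\|x_j^n\|^{1/n}\to 0$ by quasinilpotence of $x_j$. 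By the dominated convergence theorem the sum tends to $0$. Hence $((x^*)^nx^n)^{1/2n}\to 0$ in the strong operator topology, i.e., $x$ is s.o.t.--quasinilpotent.

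(ii) By the spectral radius formula, $x$ is quasinilpotent if and only if $r(x)=\lim_{n\to\infty}\|x^n\|^{1/n}=0$. Since $\|x^n\|=\sup_{j\in J}\|x_j^n\|$, this is precisely the condition~\eqref{eq:xqn}.

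(iii) We verify~\eqref{eq:xqn}. Since $\|x\|=\sup_j\|x_j\|<\infty$, the hypotheses make sense. Let $\eps>0$. By hypothesis choose $N\in\Nats$ so that $\|x_j\|<\eps$ whenever $n(j)>N$. Fix $n>N$ and $j\in J$. If $n\ge n(j)$, then $x_j^n=0$. If $n<n(j)$, then $n(j)>n>N$, so $\|x_j\|<\eps$ and therefore $\|x_j^n\|\le\|x_j\|^n<\eps^n$. In either case $\|x_j^n\|\le\eps^n$. Taking the supremum over $j$ gives $\sup_{j\in J}\|x_j^n\|\le\eps^n$, hence $\big(\sup_{j\in J}\|x_j^n\|\big)^{1/n}\le\eps$ for all $n>N$. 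Thus $\limsup_{n\to\infty}\big(\sup_{j\in J}\|x_j^n\|\big)^{1/n}\le\eps$, and since $\eps>0$ was arbitrary, the limit in~\eqref{eq:xqn} is $0$. By part (ii), $x$ is quasinilpotent.
\end{proof}
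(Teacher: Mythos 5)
Your proof is correct and takes essentially the same approach as the paper: coordinatewise norm identities reduce all three parts to elementary estimates. The only stylistic difference is in (i), where the paper verifies the strong-operator limit on the dense set of finitely supported vectors (relying implicitly on the uniform bound $\|((x^*)^nx^n)^{1/2n}\|\le\|x\|$ to pass to the closure), while you test on an arbitrary $\xi$ and invoke dominated convergence; both use the same key bound and the same underlying facts.
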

\begin{proof}
If $\xi=(\xi_j)_{j\in J}\in\bigoplus_{j\in J}\HEu_j$ with $\xi_j=0$ for all $j\in J\backslash F$, where $F$ is a finite
subset of $J$, then
\[
\|((x^*)^nx^n)^{1/2n}\xi\|\le\left(\max_{j\in F}\|x_j^n\|^{1/n}\right)\|\xi\|\to0\text{ as }n\to\infty.
\]
This implies
\[
\text{s.o.t.--}\lim_{n\to\infty}((x^*)^nx^n)^{1/2n}=0,
\]
proving~(i).

Assertion~(ii) results from the formula for $\|x^n\|$,
while the hypothesis of~(iii) implies~\eqref{eq:xqn}.
\end{proof}

\begin{lemma}\label{lem:typeIn}
Let $n\in\Nats$ and let $\Mcal=L^\infty(X,\nu)\otimes M_n(\Cpx)$ be a type I$_n$ von Neumann algebra with separable predual
and let $a=a^*\in\Mcal$.
Then $a$ is the real part of a quasinilpotent element in $\Mcal$ if and only if the center--valued trace of $a$ is zero.
In this case, there is $z\in\Mcal$ with $z^*+z=a$, $z^n=0$ and $\|z\|\le K\|a\|$.
\end{lemma}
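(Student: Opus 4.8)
The plan is to realize $\Mcal$ as a direct integral $\int_X^{\oplus}M_n\,d\nu(x)$ (legitimate since $\Mcal$ has separable predual), so that an element of $\Mcal$ is an essentially bounded measurable field $x\mapsto y(x)\in M_n$, the norm is the essential supremum of $x\mapsto\|y(x)\|$, and the center--valued trace is $\tau_Z(y)(x)=\tfrac1n\tr(y(x))$. The whole construction will then be carried out fibrewise, feeding each fibre $a(x)$ into Theorem~\ref{thm:partial}.

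For the necessity half of the equivalence, suppose $a=z+z^*$ with $z\in\Mcal$ quasinilpotent. First I would note that $\|z(x)^k\|\le\|z^k\|$ for a.e.\ $x$, for each $k\in\Nats$; intersecting these conditions over all $k$ and letting $k\to\infty$ forces a.e.\ fibre $z(x)\in M_n$ to have spectral radius $0$, hence to be nilpotent, hence to satisfy $\tr(z(x))=0$. Thus $\tau_Z(z)=0$ and so $\tau_Z(a)=\tau_Z(z)+\tau_Z(z)^*=0$. For the converse (and the quantitative statement) assume $\tau_Z(a)=0$. I would first diagonalize $a$ measurably: by the measurable spectral theorem for fields of self--adjoint matrices there are a measurable unitary field $u\colon X\to M_n$ and measurable ordered eigenvalues $\lambda_1(x)\ge\cdots\ge\lambda_n(x)$ with $a(x)=u(x)^*\diag(\lambda_1(x),\ldots,\lambda_n(x))u(x)$; the hypothesis $\tau_Z(a)=0$ gives $\sum_j\lambda_j(x)=0$ a.e., so $\lambda(x):=(\lambda_1(x),\ldots,\lambda_n(x))\in\Vc_n$ with $\|\lambda(x)\|_\infty=\|a(x)\|$. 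Next, for each $x$ I would take the permutation produced in the proof of Theorem~\ref{thm:partial} --- concretely the greedy rule that appends the largest, respectively smallest, remaining eigenvalue whenever the running partial sum is negative, respectively nonnegative --- to get $\sigma(x)\in S_n$ with all partial sums of $\kappa(x):=\lambda(x)\circ\sigma(x)$ of absolute value $\le\|\lambda(x)\|_\infty$, so that Theorem~\ref{thm:partial} yields $\|T_{\kappa(x)}\|\le K\|a(x)\|$.

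Finally I would assemble $z$. Let $P(x)$ be the permutation matrix with $\diag(\kappa(x))=P(x)\diag(\lambda(x))P(x)^*$, and recall from Section~\ref{sec2} that $T_{\kappa(x)}+T_{\kappa(x)}^*=U_n^*\diag(\kappa(x))U_n$; then for $V(x):=u(x)^*P(x)^*U_n$ one checks $V(x)^*a(x)V(x)=T_{\kappa(x)}+T_{\kappa(x)}^*$, i.e.\ $a(x)=V(x)(T_{\kappa(x)}+T_{\kappa(x)}^*)V(x)^*$. Setting $z(x):=V(x)T_{\kappa(x)}V(x)^*$ gives $z(x)+z(x)^*=a(x)$, while $z(x)^n=V(x)T_{\kappa(x)}^nV(x)^*=0$ since $T_{\kappa(x)}$ is strictly upper triangular, and $\|z(x)\|=\|T_{\kappa(x)}\|\le K\|a\|$. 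Hence the field $z:=(z(x))_x$ is an element of $\Mcal$ with $z+z^*=a$, $z^n=0$ (so $z$ is quasinilpotent), and $\|z\|\le K\|a\|$. The step I expect to be the main obstacle is the measurability bookkeeping: one needs the measurable spectral theorem to pick $u$ and the $\lambda_j$ measurably (classical, but requiring some care on the set where eigenvalues of $a(x)$ collide, or a Kuratowski--Ryll-Nardzewski selection argument), and one needs that $x\mapsto\sigma(x)$ is measurable --- which is clear, since $\sigma(x)$ is determined by finitely many comparisons of measurable quantities, hence is a measurable map into the finite set $S_n$, whence $\kappa(x)$, $P(x)$, $T_{\kappa(x)}$ and $V(x)$ are all measurable in $x$. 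Everything else is a fibrewise invocation of results already in hand, and the bound $\|z\|\le K\|a\|$ is automatic because unitary conjugation is isometric.
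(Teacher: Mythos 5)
Your proposal is correct and follows essentially the same route as the paper's proof: identify $\Mcal$ with bounded $\nu$--measurable $M_n$--valued functions, observe that quasinilpotency of $z$ forces a.e.\ fibre $z(x)$ to be a nilpotent $n\times n$ matrix (hence of vanishing trace), and in the converse direction diagonalize $a$ measurably, reorder the eigenvalue field measurably so Theorem~\ref{thm:partial} applies fibrewise, and conjugate $T_{\kappa(x)}$ back by the measurable unitary field to get the nilpotent $z$. You spell out the measurable--selection bookkeeping and the explicit greedy permutation in more detail than the paper (which compresses these into ``standard arguments'' and ``we may change $V_n(x)$ in a measurable way''), but the underlying argument is identical.
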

\begin{proof}
We identify $\Mcal$ with the bounded, $\nu$--measurable functions $X\to M_n(\Cpx)$.
Then the center--valued trace of $a\in\Mcal$ is just the scalar valued function $\tr_n(a(x))$.
If $a=z+z^*$ for $z\in\Mcal$ with $z$ quasinilpotent, then for almost every $x\in X$,
$z(x)$ will be nilpotent and its $n$th power must vanish.
In particular, the matrix trace of $z(x)$ will vanish for almost every $x$;
consequently, the center--valued trace of $a$ is zero.

If $a=a^*\in\Mcal$, then
by standard arguments
we can choose a $\nu$--measurable
unitary--valued function $V_n:X\to M_n(\Cpx)$ so that $V_n(x)a(x)V_n(x)^*$ is diagonal for all $x\in X$.
Let $\lambda(x)$ be the diagonal entries, i.e., $V_n(x)a(x)V_n(x)^*=\diag(\lambda(x))$.
If the center--valued trace of $a$ is zero, then the sum of $\lambda(x)$ is zero
(for almost every $x$) and since we may change $V_n(x)$  in a measurable way to re--order the diagonal elements
as needed,
using Theorem~\ref{thm:partial},
we have $\|T_{\lambda(x)}\|\le K\|a(x)\|$.
Then $z(x)=V_n(x)^*T_{\lambda(x)}V_n(x)$ is the desired nilpotent element.
\end{proof}

Combining Lemma~\ref{lem:typeIn} and Proposition~\ref{obs:prod}, we obtain the following,
which is a partial answer to Question~\ref{ques:typeI}.

\begin{prop}\label{prop:typeI}
Let $\Mcal$ be a finite type~I von Neumann algebra with separable predual.
We may write
\[
\Mcal=\prod_{n\in J}L^\infty(X_n,\nu_n)\otimes M_n(\Cpx)
\]
for some $J\subseteq\Nats$ and some nonzero finite measure $\nu_n$.
If $a=z+z^*$ for $z\in\Mcal$ an s.o.t.--quasinilpotent element, then the center--valued trace of $a$ is zero.

Conversely, suppose $a=a^*\in\Mcal$
and that the center--valued trace of $a$ is zero.
\begin{enumerate}[(i)]
\item Then $a=z+z^*$ for an s.o.t.--quasinilpotent element $z\in\Mcal$ with $\|z\|\le K\|a\|$.
\item
If $J$ is finite or if $J$ is infinite but $a=a^*=(a_n)_{n\in J}\in\Mcal$ with
\[
a_n\in L^\infty(X_n,\nu_n)\otimes M_n(\Cpx)\quad\text{and}\quad
\lim_{J\ni n\to\infty}\|a_n\|=0,
\]
then
there is quasinilpotent element $z\in\Mcal$ with $z+z^*=a$ and $\|z\|\le K\|a\|$.
\end{enumerate}
\end{prop}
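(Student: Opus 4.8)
The plan is to reduce both directions to the single--summand statement of Lemma~\ref{lem:typeIn} together with Proposition~\ref{obs:prod} (for sufficiency) and the Haagerup--Schultz theorem (for necessity). For necessity, suppose $a=z+z^*$ with $z\in\Mcal$ s.o.t.--quasinilpotent; I would in fact show that any s.o.t.--quasinilpotent element $z$ of a finite von Neumann algebra has vanishing center--valued trace $E(z)$, whence $E(a)=E(z)+E(z)^*=0$. The first step is that for any central projection $p$ the compression $pz$ is again s.o.t.--quasinilpotent in $p\Mcal$: since $p$ is central, $\big(((pz)^*)^k(pz)^k\big)^{1/2k}=p\big((z^*)^kz^k\big)^{1/2k}$, and left multiplication by the fixed operator $p$ is s.o.t.--continuous. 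The second step is to fix a faithful normal tracial state $\tau$ on $\Mcal$ and apply the Haagerup--Schultz theorem to $pz\in p\Mcal$: it yields $\supp(\mu_{pz})=\{0\}$, so $\mu_{pz}=\delta_0$ and hence $\tau(pz)=\tau(p)\int\zeta\,d\mu_{pz}(\zeta)=0$ (trivially so when $\tau(p)=0$). Since $\tau\circ E=\tau$ and $E$ is a $Z(\Mcal)$--bimodule map, this says $\tau\big(p\,E(z)\big)=0$ for every central projection $p$; applying it to the spectral projections of $\RealPart E(z)$ and of $\ImagPart E(z)$ and using faithfulness of $\tau$ on $Z(\Mcal)$ forces $E(z)=0$. (Alternatively one can argue fibrewise: the Brown measure of $z_n\in L^\infty(X_n,\nu_n)\otimes M_n$ is the $\nu_n$--average of the normalized eigenvalue counting measures of the matrices $z_n(x)$, so $\supp(\mu_{z_n})=\{0\}$ forces $z_n(x)$ to be nilpotent, hence traceless, for almost every $x$.)

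For sufficiency, let $a=a^*=(a_n)_{n\in J}$ have vanishing center--valued trace, so each $a_n\in L^\infty(X_n,\nu_n)\otimes M_n(\Cpx)$ also has vanishing center--valued trace. Applying Lemma~\ref{lem:typeIn} in each summand produces $z_n$ with $z_n+z_n^*=a_n$, $z_n^n=0$, and $\|z_n\|\le K\|a_n\|$; set $z=(z_n)_{n\in J}$. Then $\|z\|=\sup_{n\in J}\|z_n\|\le K\sup_{n\in J}\|a_n\|=K\|a\|<\infty$, so $z\in\Mcal$, and $z+z^*=a$. For part~(i), each $z_n$ is nilpotent, hence quasinilpotent, so $z$ is s.o.t.--quasinilpotent by Proposition~\ref{obs:prod}(i). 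For part~(ii): if $J$ is finite then $z^N=0$ with $N=\max J$, so $z$ is nilpotent, in particular quasinilpotent; if $J$ is infinite with $\|a_n\|\to0$ as $J\ni n\to\infty$, then $\|z_n\|\le K\|a_n\|\to0$ while $z_n^n=0$, so Proposition~\ref{obs:prod}(iii) applies with $n(n):=n$ and $z$ is quasinilpotent. In all cases $\|z\|\le K\|a\|$.

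The sufficiency direction is essentially bookkeeping with the $\ell^\infty$--product decomposition once Lemma~\ref{lem:typeIn} and Proposition~\ref{obs:prod} are available. The one place where care is needed, and the main obstacle, is the necessity of the trace condition: the crux is to see that s.o.t.--quasinilpotency descends to central compressions (equivalently, fibrewise, that it forces almost--everywhere nilpotency of the matrix fibres), which is what lets one bring the Haagerup--Schultz characterization and the Brown measure to bear; everything after that is routine.
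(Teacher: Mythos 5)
Your sufficiency argument is precisely the combination the paper has in mind: apply Lemma~\ref{lem:typeIn} componentwise to get nilpotent $z_n$ with $z_n^n=0$ and $\|z_n\|\le K\|a_n\|$, then assemble $z=(z_n)_n$ and invoke Proposition~\ref{obs:prod}(i) or (iii) as appropriate; the norm bookkeeping and the choice $n(j)=j$ in~(iii) are handled correctly.

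For the necessity direction your treatment is more careful than what the paper explicitly records. The paper cites Lemma~\ref{lem:typeIn}, but that lemma only establishes vanishing of the center--valued trace under the stronger hypothesis that $z$ is \emph{quasinilpotent}; Proposition~\ref{prop:typeI} asserts it for merely s.o.t.--quasinilpotent $z$. Your argument closes that gap in two equivalent ways. The fibrewise alternative you sketch --- s.o.t.--quasinilpotence of $z_n$ forces $((z_n(x)^*)^k z_n(x)^k)^{1/2k}\to0$ a.e., and for a fixed $n\times n$ matrix this limit has spectrum equal to the absolute values of the eigenvalues, so $z_n(x)$ is nilpotent and traceless a.e. --- is exactly the natural extension of the proof of Lemma~\ref{lem:typeIn} and is what the paper implicitly relies on. Your primary route, via the Haagerup--Schultz theorem and the first moment of the Brown measure applied to central compressions $pz$, is a genuinely different and more abstract argument; it works in any finite von Neumann algebra with separable predual and so proves the necessity direction without using the type~I structure at all, at the modest cost of invoking $\int\zeta\,d\mu_{pz}(\zeta)=\tau_p(pz)$ for the Brown measure. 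Both are correct; the abstract version buys generality, the fibrewise version stays elementary and closer to the paper's framework.
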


It looks like further progress in answering Question~\ref{ques:typeI} using these
techniques involving upper triangular Toeplitz matrices
could be made only with better understanding of the behavior
of $\|T_{\lambda}^n\|^{1/n}$ for large $n$ and long $\lambda$.

\medskip
By embedding finite type I von Neumann algebras into II$_1$--factors and using Proposition~\ref{prop:typeI},
one can obtain many examples
of self--adjoint elements in II$_1$--factors that are real parts of quasinilpotents.
Recall that the distribution of a self--adjoint element of a II$_1$--factor is
the probability measure that is the trace composed with spectral measure.
\begin{prop}\label{prop:R}
Let $R$ be the hyperfinite II$_1$--factor
and let $D\subset R$ be its Cartan (i.e., diagonal) maximal abelian self--adjoint subalgebra.
Suppose a compactly supported Borel probability measure $\nu$ on $\Reals$ is of the form
\begin{equation}\label{eq:nu}
\nu=\sum_{n\in J}\frac1{n}\int(\delta_{f_{n,1}(t)}+\delta_{f_{n,2}(t)}+\cdots+\delta_{f_{n,n}(t)})\,d\nu_n(t),
\end{equation}
where $J\subseteq\Nats$ or $J=\Nats$,
where each $\nu_n$ is a nonzero positive measure on a standard Borel space $X$ with $\sum_{j\in J}\nu_j(X)=1$
and where $f_{n,1},\ldots,f_{n,n}$ are real--valued measurable functions on $X$ such that for $\nu_n$--almost every $x$ we have
$f_{n,1}(x)+\cdots+f_{n,n}(x)=0$.

\begin{enumerate}[(i)]
\item
Then there is an s.o.t.--quasinilpotent element $z\in R$ such that $a:=z+z^*\in D$, $\|z\|\le K\|a\|$ and the distribution
$\mu_a$ is equal to $\nu$.
\item
If $J$ is finite, then the element $z$ can be chosen to be nilpotent.
\item
Suppose $J$ is infinite and let
\[
M_n=max\{\|f_{n,1}\|_\infty,\ldots,\|f_{n,n}\|_\infty\},
\]
where the norms are in $L^\infty(\nu_n)$.
If $\lim_{j\to\infty}M_j=0$,
then the element $z$ can be chosen to be quasinilpotent.
\end{enumerate}
\end{prop}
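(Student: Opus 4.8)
The plan is to lift the whole situation into a finite type~I von Neumann algebra $\Mcal$, solve it there with Lemma~\ref{lem:typeIn} and Proposition~\ref{prop:typeI}, and transport the answer into $R$ along a trace--preserving embedding carrying the diagonal subalgebra into the prescribed Cartan MASA $D$. Concretely, I would form $\Mcal=\prod_{n\in J}L^\infty(X_n,\nu_n)\otimes M_n(\Cpx)$ with the tracial state $\tau_\Mcal\big((b_n)_n\big)=\sum_{n\in J}\int\tr_n(b_n(x))\,d\nu_n(x)$; this is faithful and normal, and $\tau_\Mcal(1)=\sum_{n\in J}\nu_n(X)=1$, so $(\Mcal,\tau_\Mcal)$ is a finite von Neumann algebra with separable predual. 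Let $\mathcal{D}=\prod_{n\in J}L^\infty(X_n,\nu_n)\otimes\Delta_n$, where $\Delta_n\subseteq M_n(\Cpx)$ is the diagonal, be the ``diagonal'' Cartan subalgebra of $\Mcal$, and put $a=(a_n)_n\in\mathcal{D}$ with $a_n(x)=\diag(f_{n,1}(x),\dots,f_{n,n}(x))$. Compact support of $\nu$ forces $a$ bounded, and $f_{n,1}(x)+\dots+f_{n,n}(x)=0$ says exactly that the center--valued trace of $a$ vanishes. For bounded Borel $g$, $\tau_\Mcal\big(g(a)\big)=\sum_{n\in J}\int\frac1n\sum_{j=1}^n g(f_{n,j}(x))\,d\nu_n(x)=\int g\,d\nu$, so the distribution of $a$ inside $\Mcal$ equals $\nu$.

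Next I would produce a normal, unital, trace--preserving embedding $\pi\colon\Mcal\hookrightarrow R$ with $\pi(\mathcal{D})\subseteq D$. Choose pairwise orthogonal projections $p_n\in D$ $(n\in J)$ with $\sum_{n\in J}p_n=1$ and $\tau(p_n)=\nu_n(X)$, so that $\prod_{n\in J}p_nRp_n=\{p_n:n\in J\}'\cap R$ is a von Neumann subalgebra of $R$; it then suffices to embed the $n$th block of $\Mcal$ into $p_nRp_n$, trace--preservingly and with its diagonal landing in $Dp_n$, and to assemble. For the $n$th block: since $D$ is a Cartan, hence regular, MASA in $R$ and its orbit equivalence relation is ergodic of type~II$_1$, there exist projections $q_1,\dots,q_n\in Dp_n$ of trace $\tau(p_n)/n$ and partial isometries $v_1,\dots,v_n\in\mathcal{N}_R(D)$ with $v_i^*v_i=q_1$, $v_iv_i^*=q_i$ (equivalently, $(p_nRp_n,Dp_n)\cong\big(M_n(\Cpx)\,\overline{\otimes}\,R,\ \Delta_n\,\overline{\otimes}\,D\big)$ by the Connes--Feldman--Weiss uniqueness of Cartan MASAs of $R$). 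Fixing also a trace--preserving embedding $\phi\colon L^\infty(X_n,\nu_n)\hookrightarrow Dq_1$ (possible since $Dq_1$ is diffuse), the assignment $b\otimes E^{(n)}_{ij}\mapsto v_i\,\phi(b)\,v_j^*$ gives a trace--preserving embedding of $L^\infty(X_n,\nu_n)\otimes M_n(\Cpx)$ into $p_nRp_n$ carrying its diagonal into $\bigoplus_i v_i(Dq_1)v_i^*\subseteq Dp_n\subseteq D$. Relabelling $a:=\pi(a)\in D$, we have $\|a\|$ unchanged and $\mu_a=\nu$ since $\pi$ is trace--preserving.

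Finally, apply Lemma~\ref{lem:typeIn} on each block of $\Mcal$ to get $z=(z_n)_n\in\Mcal$ with $z+z^*=a$, $z_n^{\,n}=0$, and $\|z\|\le K\|a\|$; then $\pi(z)\in R$ satisfies $\pi(z)+\pi(z)^*=a\in D$ and $\|\pi(z)\|\le K\|a\|$. Nilpotency, quasinilpotency and s.o.t.--quasinilpotency all pass from a tracial von Neumann subalgebra to the ambient algebra: for the last, $\|((z^*)^kz^k)^{1/2k}\|\le\|z\|$ and $\tau\big(((z^*)^kz^k)^{1/k}\big)\to0$ force strong--operator convergence to $0$ in $L^2(R,\tau)$ as well. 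Hence: (i) $z$ is s.o.t.--quasinilpotent in $\Mcal$ by Proposition~\ref{prop:typeI}(i), so $\pi(z)$ is s.o.t.--quasinilpotent in $R$; (ii) if $J$ is finite then $z^{\max J}=0$, so $\pi(z)$ is nilpotent; (iii) if $J$ is infinite with $M_j\to0$ then $\|z_n\|\le K\|a_n\|\le KM_n\to0$ while $z_n^{\,n}=0$, so $z$, hence $\pi(z)$, is quasinilpotent by Proposition~\ref{obs:prod}(iii). In every case $a=\pi(z)+\pi(z)^*\in D$, $\|\pi(z)\|\le K\|a\|$, and $\mu_a=\nu$, as required.

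The one step that is not bookkeeping is the compatible embedding $(\Mcal,\mathcal{D})\hookrightarrow(R,D)$, i.e.\ that the diagonal of $\Mcal$ can be placed inside the \emph{given} Cartan MASA of $R$; I would justify it via regularity of $D$ together with the Connes--Feldman--Weiss uniqueness of Cartan MASAs in the hyperfinite II$_1$ factor (or, by hand, using ergodicity and aperiodicity of the associated measured equivalence relation to build the partial isometries $v_i$ above). Everything else is a direct reduction, through the results already established, to the matrix estimate of Theorem~\ref{thm:partial}.
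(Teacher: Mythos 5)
Your proof is correct and follows essentially the same route as the paper: realize $\nu$ as the distribution of a diagonal element of a finite type~I von Neumann algebra $\Mcal=\prod_{n\in J}L^\infty(X_n,\nu_n)\otimes M_n(\Cpx)$ sitting (via a trace--preserving embedding compatible with the Cartan masa) between $D$ and $R$, then invoke Lemma~\ref{lem:typeIn}/Proposition~\ref{prop:typeI} and Proposition~\ref{obs:prod} to produce $z$ with the desired (s.o.t.--)quasinilpotency or nilpotency. You simply flesh out two points the paper leaves terse --- the construction of the embedding $(\Mcal,\mathcal D)\hookrightarrow(R,D)$ by partial isometries in the normalizing groupoid, and the transfer of s.o.t.--quasinilpotency through a normal trace--preserving embedding --- both of which are handled correctly.
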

\begin{proof}
We can realize $D$ as a copy of
\[
\prod_{n\in J}L^\infty(\nu_n)^{\oplus n}
\]
in $R$, and using partial isometries from $R$ we can find a type I subalgebra $\Mcal$
with $D\subseteq\Mcal\subset R$ of the form
\[
\Mcal=\prod_{n\in J}L^\infty(\nu_n)\otimes M_{n}(\Cpx),
\]
where identifying each $L^\infty(\nu_n)\otimes M_{n}(\Cpx)$ with the $M_n(\Cpx)$--valued $\nu_n$--measurable functions,
$D$ is identified with the product of the sets of functions taking values in the diagonal matrices.
The element $a=\big(\diag(f_{n,1}(\cdot),\ldots,f_{n,n}(\cdot))\big)_{n\in J}$ belongs to $D$,
has center--valued trace in $\Mcal$ equal to zero and has distribution $\nu$.
Now we apply Proposition~\ref{prop:typeI}, to find $z\in\Mcal$ having the desired properties.
\end{proof}

The question of whether every self--adjoint element of $a\in D$ having distribution $\nu$ as in the above proposition is the
real part of a quasinilpotent remains unanswered in general, though it is not hard to show that
if the essential ranges of the functions $(f_{n,i})_{n\in J,\,1\le i\le n}$
are pairwise disjoint, then the answer is yes, by the construction used above.
The similar question for arbitrary self--adjoint elements of $R$ is even less clear.
However, in the ultrapower of the hyperfinite II$_1$--factor,
the answer is yes.

\begin{prop}
Let $R^\omega$ be an ultrapower of the hyperfinite II$_1$ factor, for $\omega$ a non--principle ultrafilter on $\Nats$.
Let $a=a^*\in R^\omega$ and suppose the distribution of $a$ is of the form $\nu$ as in~\eqref{eq:nu}.
\begin{enumerate}[(i)]
\item
Then there is an s.o.t.--quasinilpotent element $z\in R^\omega$ such that $a=z+z^*$ and $\|z\|\le K\|a\|$.
\item
If $J$ is finite, then the element $z$ can be chosen to be nilpotent.
\item
Suppose $J$ is infinite and let
\[
M_n=max\{\|f_{n,1}\|_\infty,\ldots,\|f_{n,n}\|_\infty\},
\]
where the norms are in $L^\infty(\nu_n)$.
If $\lim_{j\to\infty}M_j=0$,
then the element $z$ can be chosen to be quasinilpotent.
\end{enumerate}
\end{prop}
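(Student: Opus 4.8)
The plan is to deduce this from Proposition~\ref{prop:R} by realizing $a$, inside $R^{\omega}$, as a unitary conjugate of the element constructed there. Since $\nu$ is assumed to be of the form~\eqref{eq:nu}, Proposition~\ref{prop:R} applies to $\nu$ and yields an s.o.t.--quasinilpotent element $z_0$ of the hyperfinite II$_1$--factor $R$ such that $a_0:=z_0+z_0^*$ lies in the Cartan subalgebra $D$, has distribution $\mu_{a_0}=\nu$, and satisfies $\|z_0\|\le K\|a_0\|$; moreover $z_0$ is nilpotent when $J$ is finite, and quasinilpotent when $J$ is infinite with $M_j\to0$. Since $a$ and $a_0$ have the same distribution $\nu$, we have $\|a_0\|=\max\{|t|:t\in\supp(\nu)\}=\|a\|$, so $\|z_0\|\le K\|a\|$.

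Now regard $R\subseteq R^{\omega}$ via constant sequences, so that $z_0,a_0\in R^{\omega}$ and $a_0=z_0+z_0^*$. The crux is the following fact: \emph{any two self-adjoint elements of $R^{\omega}$ with the same distribution are unitarily conjugate in $R^{\omega}$.} Granting this, choose a unitary $v\in R^{\omega}$ with $a=v a_0 v^*$ and set $z:=v z_0 v^*$. Then $z+z^*=v a_0 v^*=a$ and $\|z\|=\|z_0\|\le K\|a\|$, which gives (i) once we check that $z$ is s.o.t.--quasinilpotent. Conjugation by $v$ commutes with continuous functional calculus, so $((z^*)^nz^n)^{1/2n}=v((z_0^*)^nz_0^n)^{1/2n}v^*$; and conjugation by a fixed unitary sends strong-operator null sequences on $L^2(R^{\omega})$ to strong-operator null sequences. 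Since $z_0$ is s.o.t.--quasinilpotent in $R$ and the Brown measure of $z_0$ --- hence, by Theorem~8.1 of~\cite{HS09}, the quantity in~\eqref{eq:HS} --- is unchanged when $z_0$ is viewed in the trace-preserving extension $R^{\omega}$, the element $z_0$ is s.o.t.--quasinilpotent in $R^{\omega}$ as well, whence so is $z$. For (ii) and (iii), $z^n=v z_0^n v^*$ together with the isometry $R\hookrightarrow R^{\omega}$ gives that $z$ is nilpotent, resp.\ quasinilpotent, exactly when $z_0$ is.

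It remains to prove the displayed fact. Let $a=(a_k)_{\omega}$ and $b=(b_k)_{\omega}$ be self-adjoint with the same distribution $\mu$, and choose self-adjoint lifts with $\|a_k\|,\|b_k\|\le C:=\|a\|$, which is possible since truncating by the $1$-Lipschitz function $t\mapsto\max(-C,\min(C,t))$ does not change the class in $R^{\omega}$. Using $\|\cdot\|_2$-density of $\bigcup_n M_{2^n}$ in $R$ and the same truncation, we may further replace $a_k,b_k$ by self-adjoint matrices in a common term $M_{2^{d_k}}$ of a fixed tower $M_{2}\subset M_{4}\subset\cdots$ of $R$ with dense union, still of norm $\le C$ and still representing $a,b$. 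Because $\|\cdot\|_2$-small perturbations of self-adjoint elements induce $W_2$-small perturbations of their distributions, and because weak-$*$ convergence of uniformly compactly supported probability measures coincides with $W_2$-convergence, we get $\mu_{a_k}\to\mu$ and $\mu_{b_k}\to\mu$, hence $W_2(\mu_{a_k},\mu_{b_k})\to0$, along $\omega$. For fixed $k$, the Hoffman--Wielandt inequality provides a unitary $U_k\in M_{2^{d_k}}$ with $\|U_k a_k U_k^*-b_k\|_2$ equal to the normalized sorted-eigenvalue $\ell^2$-distance between $a_k$ and $b_k$, which for matrices of equal size is precisely $W_2(\mu_{a_k},\mu_{b_k})$. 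Thus $\|U_k a_k U_k^*-b_k\|_2\to0$ along $\omega$, so $v:=(U_k)_{\omega}$ is a unitary of $R^{\omega}$ with $vav^*=b$, as required.

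The main obstacle is exactly this last fact. Unlike in a general II$_1$--factor, where equality of distributions only places $b$ in the $\|\cdot\|$-closure of the unitary orbit of $a$, in the ultrapower one can push the problem coordinatewise down to honest matrices and solve it there via Hoffman--Wielandt; checking that the resulting coordinate errors vanish along $\omega$ is what makes the ultrapower essential. The remaining steps are only standard transfers of spectral information across a unitary conjugation and across the inclusion $R\subseteq R^{\omega}$.
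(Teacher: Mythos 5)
Your proof follows the paper's approach exactly: apply Proposition~\ref{prop:R} to produce $z_0\in R\subseteq R^\omega$ with $z_0+z_0^*$ of distribution $\nu$, then conjugate by a unitary, using the fact that self--adjoint elements of $R^\omega$ with the same distribution are unitarily equivalent. The only difference is that the paper asserts this unitary-equivalence fact without proof, whereas you supply a correct detailed argument for it (via Hoffman--Wielandt and Wasserstein distance) and also spell out the transfer of s.o.t.--quasinilpotency across the inclusion $R\subseteq R^\omega$.
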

\begin{proof}
Since $R\subseteq R^\omega$ as a unital W$^*$--subalgebra, using Proposition~\ref{prop:R}, there is $b=b^*\in R^\omega$
whose distribution is $\nu$ and with s.o.t.--quasinilpotent $y\in R^\omega$ such that $b=y+y^*$ and $\|y\|\le K\|b\|$,
and according with the additional stipulations of~(ii) and~(iii) in the case that the corresponding hypotheses are satisfied.
Since all self--adjoint elements in $R^\omega$ having given distribution are unitarily equivalent, we find $z$ as a unitary conjugate of $y$.
\end{proof}

For a purely spectral condition that is sufficient for a self--adjoint to be  the real part of a quasinilpotent, valid in all II$_1$--factors,
we turn to discrete measures.
\begin{prop}
Let $\Mcal$ be a II$_1$--factor with trace $\tau$ and let $a=a^*\in\Mcal$ with $\tau(a)=0$.
Suppose that the distribution $\mu_a$ of $a$ is a discrete measure that can be written
\[
\mu_a=\sum_{i\in I}s_i\left(\frac1{n(i)}\sum_{k=1}^{n(i)}\delta_{t(i,k)}\right),
\]
where for all $i\in I$, $s_i>0$, $n(i)\in\Nats$, $t(i,k)\in\Reals$, $\sum_{k=1}^{n(i)}t(i,k)=0$
and where $\delta_t$ denotes the Dirac measure at $t$ and  $\sum_{i\in I}s_i=1$.
\begin{enumerate}[(i)]
\item
Then there is an s.o.t.--quasinilpotent element $z\in\Mcal$ such that $a=z+z^*$ and $\|z\|\le K\|a\|$.
\item
If $\sup_{i\in I}n(i)<\infty$, then the element $z$ can be chosen to be nilpotent.
\item
Let
$M_i=max\{|t(i,1)|,\ldots,|t(i,n(i))|\}$.
If
\[
\lim_{N\to\infty}(\sup\{M_i\mid i\in I,\,n(i)>N\})=0,
\]
then the element $z$ can be chosen to be quasinilpotent.
\end{enumerate}
\end{prop}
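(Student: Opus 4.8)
The plan is to reduce to the matrix estimate of Theorem~\ref{thm:partial} by realizing $a$, up to unitary conjugacy, as a diagonal element of a direct product of matrix algebras sitting inside $\Mcal$. Note first that $I$ is at most countable, since $s_i>0$ and $\sum_i s_i=1$, that $\|a\|=\sup_i M_i<\infty$, and that consequently $\tau(a)=\sum_i s_i\frac1{n(i)}\sum_k t(i,k)=0$ automatically.

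Since $\Mcal$ is a II$_1$--factor, I would choose pairwise orthogonal projections $(p_i)_{i\in I}$ in $\Mcal$ with $\tau(p_i)=s_i$ and $\sum_i p_i=1$, and inside each corner $p_i\Mcal p_i$, itself a II$_1$--factor, fix a system of matrix units $(e^{(i)}_{jk})_{1\le j,k\le n(i)}$ with $\sum_j e^{(i)}_{jj}=p_i$; then $\tau(e^{(i)}_{jj})=s_i/n(i)$, and the von Neumann algebra $\Mcal_0$ generated by all the $e^{(i)}_{jk}$ is isomorphic to $\prod_{i\in I}M_{n(i)}(\Cpx)$ and sits unitally in $\Mcal$. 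For each $i$, set $a_0^{(i)}=\sum_{k=1}^{n(i)}t(i,k)\,e^{(i)}_{kk}\in M_{n(i)}(\Cpx)$, a self--adjoint matrix of trace $0$; applying Lemma~\ref{lem:typeIn} to the matrix algebra $M_{n(i)}(\Cpx)$ (equivalently, Theorem~\ref{thm:partial} to a suitable reordering of $\diag(t(i,1),\dots,t(i,n(i)))$) gives $z_0^{(i)}\in M_{n(i)}(\Cpx)$ with $(z_0^{(i)})^{n(i)}=0$, $z_0^{(i)}+(z_0^{(i)})^*=a_0^{(i)}$ and $\|z_0^{(i)}\|\le K M_i\le K\|a\|$. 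Because the $z_0^{(i)}$ lie in pairwise orthogonal corners and are uniformly bounded in norm, $z_0:=\sum_i z_0^{(i)}$ converges in the strong operator topology to an element of $\Mcal_0$ with $\|z_0\|\le K\|a\|$ and $z_0+z_0^*=a_0:=\sum_i a_0^{(i)}$. Viewing $z_0=(z_0^{(i)})_i$ in $\prod_i M_{n(i)}(\Cpx)$, with each $z_0^{(i)}$ nilpotent, Proposition~\ref{obs:prod}(i) shows $z_0$ is s.o.t.--quasinilpotent; if $\sup_i n(i)=N<\infty$ then $z_0^N=0$; and if $\lim_{N\to\infty}\sup\{M_i:n(i)>N\}=0$, then $\lim_{N\to\infty}\sup\{\|z_0^{(i)}\|:n(i)>N\}=0$, so Proposition~\ref{obs:prod}(iii) shows $z_0$ is quasinilpotent.

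It remains to pass from $a_0$ to $a$. Since $\tau(e^{(i)}_{kk})=s_i/n(i)$, the distribution of $a_0$ equals $\sum_i\sum_k\frac{s_i}{n(i)}\delta_{t(i,k)}=\mu_a$, so $a$ and $a_0$ have the same, purely atomic, distribution. Writing $a=\sum_\alpha\alpha\,q_\alpha$ and $a_0=\sum_\alpha\alpha\,r_\alpha$, where the sums run over the atoms $\alpha$ of $\mu_a$ and $q_\alpha,r_\alpha$ are the corresponding spectral projections of $a,a_0$, one has $\tau(q_\alpha)=\mu_a(\{\alpha\})=\tau(r_\alpha)$; by Murray--von Neumann comparison in the factor $\Mcal$ I would pick partial isometries $v_\alpha$ with $v_\alpha^*v_\alpha=r_\alpha$ and $v_\alpha v_\alpha^*=q_\alpha$, so that $u:=\sum_\alpha v_\alpha$ is a unitary in $\Mcal$ with $ua_0u^*=a$. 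Then $z:=uz_0u^*$ satisfies $z+z^*=a$ and $\|z\|\le K\|a\|$, and is s.o.t.--quasinilpotent, is nilpotent under the hypothesis of~(ii), and is quasinilpotent under that of~(iii), since each of these properties is preserved by unitary conjugation and is independent of the ambient faithful normal representation. I expect no serious obstacle here, since everything quantitative is already contained in Theorem~\ref{thm:partial}; the least routine steps are the strong convergence of the sums $\sum_i z_0^{(i)}$ and $\sum_\alpha v_\alpha$ (immediate from uniform boundedness together with orthogonality of supports) and the observation that $a$ and $a_0$, having the same atomic distribution, are unitarily conjugate in $\Mcal$. This is the purely atomic counterpart of Proposition~\ref{prop:R}, and the absence of a diffuse part is precisely what allows the argument to run in an arbitrary II$_1$--factor without a separability assumption.
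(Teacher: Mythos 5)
Your proof is correct, and at bottom it is the same argument as the paper's, but you have repackaged it in a way worth noting. The paper first invokes Proposition~\ref{prop:R} to produce an element $y$ inside a copy of the hyperfinite II$_1$--factor $R$ whose real part is diagonal in the Cartan masa of $R$ and has distribution $\mu_a$, then embeds $R$ unitally into $\Mcal$ and finishes by the observation that two self-adjoints in a factor with the same purely atomic distribution are unitarily conjugate. You instead build the type~I subalgebra $\Mcal_0\cong\prod_{i\in I}M_{n(i)}(\Cpx)$ directly inside $\Mcal$, using pairwise orthogonal projections $p_i$ with $\tau(p_i)=s_i$ and matrix units in each corner, and then redo the content of Proposition~\ref{prop:R} inline: apply Theorem~\ref{thm:partial} (via Lemma~\ref{lem:typeIn}) in each block to get nilpotent $z_0^{(i)}$ with the right norm bound, assemble them into $z_0\in\Mcal_0$, and use Proposition~\ref{obs:prod} for the s.o.t.--quasinilpotent/nilpotent/quasinilpotent conclusions. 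The final unitary-conjugacy step is identical in both. The ingredients and the quantitative bound $\|z\|\le K\|a\|$ are the same; your version avoids the detour through $R$ and Proposition~\ref{prop:R}, at the cost of re-deriving what that proposition already established. Both are valid, and your remark that the purely atomic hypothesis is exactly what makes the argument run in an arbitrary II$_1$--factor (without separability) matches the paper's rationale.
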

\begin{proof}
By Proposition~\ref{prop:R}, there is a quasinilpotent element $y\in R$ such that the distribution of $y+y^*$ equals $\mu_a$
and $\|y\|\le K\|a\|$.
There is a copy of $R$ embedded as a unital W$^*$--subalgebra of $\Mcal$.
Since the spectral measure of $a$ is discrete, all self--adjoint elements in $\Mcal$
having this spectral measure are unitarily equivalent in $\Mcal$.
Thus, a unitary conjugate of $y$ is the desired element $z$.
\end{proof}

\section{Applications using inductive limits}
\label{sec:indlim}

In this section we will apply Proposition~\ref{prop:Dm2} in the setting of inductive limits
of maps like the ones in~\eqref{eq:gambeta} of Lemma~\ref{lem:commdiag}, to
conclude that some self--adjoint elements of the Cartan masa in the hyperfinite II$_1$--factor $R$
whose distributions are of a certain form, are the real parts of quasinilpotent elements in $R$.

We will use the following easy result to construct quasinilpotent elements.
\begin{prop}\label{prop:zsum}
Let $z_1,z_2,\ldots$ be pairwise commuting quasinilpotent elements in a Banach algebra $B$
and suppose $\sum_{j=1}^\infty\|z_j\|<\infty$.
Let $z=\sum_{j=1}^\infty z_j$.
Then $z$ is quasinilpotent.
\end{prop}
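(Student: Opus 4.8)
The plan is to reduce the statement to the subadditivity of the spectral radius on commuting elements. Recall that an element $w$ of a Banach algebra is quasinilpotent exactly when its spectral radius $r(w)=\lim_{k\to\infty}\|w^k\|^{1/k}$ vanishes, and that one always has $r(w)\le\|w\|$. The one nontrivial ingredient I would isolate first is the elementary fact that if $a$ and $b$ commute, then $r(a+b)\le r(a)+r(b)$. I would prove this by the standard device: fix $\epsilon>0$; by the spectral radius formula there is a constant $C=C(\epsilon)$ with $\|a^k\|\le C\,(r(a)+\epsilon)^k$ and $\|b^k\|\le C\,(r(b)+\epsilon)^k$ for all $k\ge1$; then, expanding $(a+b)^m$ for $m\ge1$ by the binomial theorem, which is legitimate because $a$ and $b$ commute, one gets $\|(a+b)^m\|\le C^2\,(r(a)+r(b)+2\epsilon)^m$, hence $r(a+b)\le r(a)+r(b)+2\epsilon$, and $\epsilon>0$ was arbitrary.

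An obvious induction then shows that every finite sum of pairwise commuting quasinilpotent elements is quasinilpotent. For the infinite sum, I would argue as follows. Fix $\epsilon>0$ and use the hypothesis $\sum_{j=1}^\infty\|z_j\|<\infty$ to choose $N$ with $\sum_{j>N}\|z_j\|<\epsilon$. Set $s=z_1+\cdots+z_N$ and $r=\sum_{j>N}z_j=z-s$; the series defining $r$ converges absolutely, and $s$ and $r$ commute, since all the $z_j$ commute and commutation is preserved under norm limits. By the previous paragraph $s$ is quasinilpotent, so
\[
r(z)=r(s+r)\le r(s)+r(r)=r(r)\le\|r\|\le\sum_{j>N}\|z_j\|<\epsilon .
\]
As $\epsilon>0$ was arbitrary, $r(z)=0$, i.e.\ $z$ is quasinilpotent.

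The only step requiring any real care is the commuting subadditivity of the spectral radius; everything else is bookkeeping, and I expect no serious obstacle. (If one prefers, the whole argument can instead be phrased inside the closed commutative subalgebra $A\subseteq B$ generated by $\{z_j\}$: there the quasinilpotent elements form precisely the Jacobson radical, which is a closed ideal, so each partial sum $z_1+\cdots+z_N$ lies in it, and since the ideal is closed so does the limit $z$.)
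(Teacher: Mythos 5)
Your proof is correct, and it takes a genuinely different route from the paper. The paper passes to the closed unital abelian subalgebra $A$ generated by the $z_j$'s and applies Gelfand theory directly: $\sigma_A(z)=\{\varphi(z):\varphi\text{ a character of }A\}$, each $\varphi(z_j)=0$ since $z_j$ is quasinilpotent, and the continuity of $\varphi$ gives $\varphi(z)=\sum_j\varphi(z_j)=0$, so $\sigma_A(z)=\{0\}$ and hence $r(z)=0$. Your argument instead isolates the subadditivity of the spectral radius on commuting pairs, $r(a+b)\le r(a)+r(b)$, proved by the binomial expansion, and then uses a tail estimate $r(z)\le r(z_1+\cdots+z_N)+\|{\textstyle\sum_{j>N}z_j}\|$. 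The two are of comparable length, but they trade off differently: your version avoids the Gelfand transform and the radical-is-a-closed-ideal fact entirely, using only the spectral radius formula, while the paper's version avoids the binomial estimate and handles the infinite sum in one stroke via continuity of characters (your closing parenthetical --- quasinilpotents form the closed Jacobson radical of the commutative subalgebra --- is essentially the paper's argument restated). One small point of care in your main route, which you do in effect address, is that the binomial estimate should be arranged so that the constant $C$ works uniformly down to $k=0$; this is harmless since the finitely many small exponents can be absorbed into $C$. Both proofs are complete and correct.
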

\begin{proof}
Without loss of generality we may take $B$ unital.
Let $A$ be the unital Banach subalgebra generated by
$z_1,z_2,\cdots$. Then $A$ is an abelian algebra. We need only to
show that the spectrum of $z$ relative to $A$ is $\{0\}$ since it
is equivalent to $\lim_{n\rightarrow\infty}\|z^n\|^{1/n}=0$.
Using 
the Gelfand transform,
\[
\sigma_A(z)=\{\varphi(z):\,\varphi\text{ is a multiplicative linear functional of }A\}.
\]
Since $z_n$ is quasinilpotent, we have
$\varphi(z_n)=0$ for every multiplicative linear functional $\varphi$ on $A$.
Since multiplicative linear functionals are automatically bounded, we have
\[
\varphi(z)=\lim_{n\rightarrow\infty}\sum_{k=1}^n\varphi(z_k)=0,
\]
which proves the lemma.
\end{proof}

Consider the Cartan masa (maximal abelian self--adjoint subalgebra) $D$ of the hyperfinite II$_1$--factor $R$,
realized as the inductive limit of the trace--preserving maps shown below,
\begin{equation}\label{eq:DinRindlim}
\xymatrix{
M_{n_1}\ar[r]^{\beta^{(1)}} & M_{n_1n_2}\ar[r]^{\beta^{(2)}} & M_{n_1n_2n_3}\ar[r]^{\beta^{(3)}\quad} &
  \cdots M_{n_1n_2\cdots n_j} \ar[r]^{\quad\;\beta^{(j)}} & \cdots R \\
\rule{0ex}{2.5ex}D_{n_1}\ar[r]^{\beta^{(1)}}\ar@{^{(}->}[u] & \rule{0ex}{2.5ex}D_{n_1n_2}\ar[r]^{\beta^{(2)}}\ar@{^{(}->}[u] &
 \rule{0ex}{2.5ex}D_{n_1n_2n_3}\ar[r]^{\beta^{(3)}\quad}\ar@{^{(}->}[u] &
 \cdots\rule{0em}{2.5ex}D_{n_1n_2\cdots n_j}\ar[r]^{\quad\;\beta^{(j)}}\ar@<-2ex>@{^{(}->}[u] &
 \cdots\rule{0ex}{2.5ex}D,\ar@<-2ex>@{^{(}->}[u]
}
\end{equation}
where $n_1,n_2,\ldots\in\{2,3,\ldots\}$ and $\beta^{(j)}$ is the map $\beta_{n_1n_2\cdots n_j,n_1n_2\cdots n_jn_{j+1}}$ defined above
Lemma~\ref{lem:commdiag}, and whose restriction to the diagonal subalgebra $D_{n_1n_2\cdots n_j}$ is as described in
Lemma~\ref{lem:commdiag}.

\begin{lemma}\label{lem:Tasum}
Suppose $a_j=a_j^*\in D_{n_1\cdots n_j}$ are such that $\tau(a_j)=0$ for all $j$.
Let $T_{a_j}=\Theta_{n_1\cdots n_j}\circ\alpha^{(j)}(a_j)\in\UTTM_{n_1\cdots n_j}^{(1)}$ and suppose $\sum_{j=1}^\infty\|T_{a_j}\|<\infty$.
Then the series $a:=\sum_{j=1}^\infty a_j\in D$ converges in norm and
there is a quasinilpotent operator $z\in R$ such that
$z^*+z=a$ and $\|z\|\le\sum_{j=1}^\infty\|T_{a_j}\|$.
\end{lemma}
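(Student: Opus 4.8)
My plan is to realize each $a_j$ in the form $w_j+w_j^*$ for a nilpotent $w_j\in R$ with $\|w_j\|=\|T_{a_j}\|$, arranged so that the $w_j$ pairwise commute; then $z:=\sum_{j\ge 1}w_j$ will be the required operator by Proposition~\ref{prop:zsum}. For the norm convergence of the series $\sum_j a_j$, note that $\alpha^{(j)}(a_j)$ is a self-adjoint Toeplitz matrix with zero diagonal whose strictly upper triangular part is $T_{a_j}$, so $\alpha^{(j)}(a_j)=T_{a_j}+T_{a_j}^*$ and hence $\|a_j\|=\|\alpha^{(j)}(a_j)\|\le 2\|T_{a_j}\|$; thus $\sum_j\|a_j\|\le 2\sum_j\|T_{a_j}\|<\infty$ and $a=\sum_j a_j$ converges in norm in $D$.

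To build the $w_j$, I would work first inside $M_{n_1\cdots n_j}$ and set $\zt_j:=(\alpha^{(j)})^{-1}(T_{a_j})$. As a unitary conjugate of the strictly upper triangular Toeplitz matrix $T_{a_j}$, this $\zt_j$ is nilpotent, has $\|\zt_j\|=\|T_{a_j}\|$, and satisfies $\zt_j+\zt_j^*=(\alpha^{(j)})^{-1}(T_{a_j}+T_{a_j}^*)=a_j$. Let $w_j\in R$ be the image of $\zt_j$ under the inductive-limit embedding $M_{n_1\cdots n_j}\hookrightarrow R$ from~\eqref{eq:DinRindlim}. Since this embedding is an isometric $*$-homomorphism, $w_j$ is nilpotent with $\|w_j\|=\|T_{a_j}\|$; and since its restriction to $D_{n_1\cdots n_j}$ is the standard inclusion $D_{n_1\cdots n_j}\hookrightarrow D$, we obtain $w_j+w_j^*=a_j$ as an identity in $D$.

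The heart of the proof is the pairwise commutation, which is precisely what Lemma~\ref{lem:commdiag} is set up to give. Fix $j<k$. The embedding of $M_{n_1\cdots n_j}$ into $R$ factors through $M_{n_1\cdots n_k}$ via $\beta^{(k-1)}\circ\cdots\circ\beta^{(j)}$, and since each $\beta^{(\ell)}$ has the form $\alpha^{-1}\circ\gamma\circ\alpha$, the cancellations $\alpha\circ\alpha^{-1}$ along the composition yield $\beta^{(k-1)}\circ\cdots\circ\beta^{(j)}(\zt_j)=(\alpha^{(k)})^{-1}(\Gamma(T_{a_j}))$, where $\Gamma$ is the composite of the $\gamma$-maps occurring in $\beta^{(j)},\ldots,\beta^{(k-1)}$. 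Repeated use of~\eqref{eq:gammaUT} places $\Gamma(T_{a_j})$ in $\UTTM_{n_1\cdots n_k}^{(0)}$, while $T_{a_k}\in\UTTM_{n_1\cdots n_k}^{(1)}\subseteq\UTTM_{n_1\cdots n_k}^{(0)}$. Because $\UTTM_{n_1\cdots n_k}^{(0)}$ is a commutative algebra, $\Gamma(T_{a_j})$ and $T_{a_k}$ commute, and applying the automorphism $(\alpha^{(k)})^{-1}$ shows that the images of $\zt_j$ and $\zt_k$ in $M_{n_1\cdots n_k}$ commute; hence $w_j$ and $w_k$ commute in $R$.

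With the $w_j$ in hand, $z:=\sum_{j\ge 1}w_j$ converges in norm since $\sum_j\|w_j\|=\sum_j\|T_{a_j}\|<\infty$; its summands are pairwise commuting nilpotents, so $z$ is quasinilpotent by Proposition~\ref{prop:zsum}, with $\|z\|\le\sum_j\|T_{a_j}\|$; and $z+z^*=\sum_j(w_j+w_j^*)=\sum_j a_j=a$. The step I expect to be the main obstacle is the commutation argument: one must carefully follow the Fourier-conjugation cancellations through the tower of maps $\beta^{(\ell)}$ so that all the $w_j$, after being transported into a common matrix algebra $M_{n_1\cdots n_k}$ and stripped of the Fourier conjugation, land inside the one commutative algebra $\UTTM_{n_1\cdots n_k}^{(0)}$; everything else is routine bookkeeping, including checking that the inductive-limit embedding is compatible with the given diagonal inclusions.
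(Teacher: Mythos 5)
Your proof is correct and is essentially the paper's argument, just transported to the opposite side of the Fourier conjugation. The paper works directly in the image algebra, summing the $T_{a_j}$ under the $\gamma^{(\ell)}$-embeddings (which preserve $\UTTM$ by~\eqref{eq:gammaUT}), invoking Proposition~\ref{prop:zsum} there, and only at the very end applying the inductive-limit automorphism $\alpha^{-1}$; you instead pull back to $w_j=(\alpha^{(j)})^{-1}(T_{a_j})$ in each $M_{n_1\cdots n_j}$, push forward via the $\beta^{(\ell)}$-maps, and verify commutation by chasing the $\alpha\circ\alpha^{-1}$ cancellations through the tower. The content — commutativity of $\UTTM^{(0)}$, preservation of $\UTTM$ under $\gamma$, and Proposition~\ref{prop:zsum} — is identical, and both yield $\|z\|\le\sum_j\|T_{a_j}\|$.
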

\begin{proof}
Using Lemma~\ref{lem:commdiag}, we have the big commuting diagram
\begin{equation}\label{eq:bigdiag}
\xymatrix{
\UT_{n_1}\ar[r]^{\gamma^{(1)}}
 & \UT_{n_1n_2}\ar[r]^{\gamma^{(2)}}
 & \UT_{n_1n_2n_3}\ar[r]^{\gamma^{(3)}\quad}
 & \cdots \UT_{n_1n_2\cdots n_j} \ar[r]^{\quad\;\gamma^{(j)}}
 & \cdots\subset R \\
M_{n_1}\ar[r]^{\gamma^{(1)}}\ar[u]_{\Theta^{(1)}}
 & M_{n_1n_2}\ar[r]^{\gamma^{(2)}}\ar[u]_{\Theta^{(2)}}
 & M_{n_1n_2n_3}\ar[r]^{\gamma^{(3)}\quad}\ar[u]_{\Theta^{(3)}}
 & \cdots M_{n_1n_2\cdots n_j} \ar[r]^{\quad\;\gamma^{(j)}}\ar@<-2ex>[u]_{\Theta^{(j)}}
 & \cdots R \\
M_{n_1}\ar[r]^{\beta^{(1)}}\ar[u]_{\alpha^{(1)}}
 & M_{n_1n_2}\ar[r]^{\beta^{(2)}}\ar[u]_{\alpha^{(2)}}
 & M_{n_1n_2n_3}\ar[r]^{\beta^{(3)}\quad}\ar[u]_{\alpha^{(3)}}
 & \cdots M_{n_1n_2\cdots n_j} \ar[r]^{\quad\;\beta^{(j)}}\ar@<-2ex>[u]_{\alpha^{(j)}}
 & \cdots R\ar@<-2ex>[u]_{\alpha} \\
\rule{0ex}{2.5ex}D_{n_1}\ar[r]^{\beta^{(1)}}\ar@{^{(}->}[u]
 & \rule{0ex}{2.5ex}D_{n_1n_2}\ar[r]^{\beta^{(2)}}\ar@{^{(}->}[u]
 & \rule{0ex}{2.5ex}D_{n_1n_2n_3}\ar[r]^{\beta^{(3)}\quad}\ar@{^{(}->}[u]
 & \cdots\rule{0em}{2.5ex}D_{n_1n_2\cdots n_j}\ar[r]^{\quad\;\beta^{(j)}}\ar@<-2ex>@{^{(}->}[u]
 & \cdots\rule{0ex}{2.5ex}D,\ar@<-2ex>@{^{(}->}[u]
}
\end{equation}
where $\gamma^{(j)}=\gamma_{n_1n_2\cdots n_j,n_1n_2\cdots n_jn_{j+1}}$ is the usual inclusion of tensor products,
where $\alpha^{(j)}=\alpha_{n_1n_2\cdots n_j}$ is the automorphism implemented by conjugation with the Fourier
matrix and their inductive limit $\alpha$ is the resulting isomorphism between copies of the hyperfinite II$_1$--factor,
and where $\Theta^{(j)}=\Theta_{n_1n_2\cdots n_j}$ is the upper triangular projection.

Since upper triangular Toeplitz matrices commute with each other,
and taking into account the observation~\eqref{eq:gammaUT} of Lemma~\ref{lem:commdiag},
by Proposition~\ref{prop:zsum} the series $\zt:=\sum_{j=1}^\infty T_{a_j}$ converges in norm to a quasinilpotent operator in $R$.
By construction, we have $T_{a_j}+T_{a_j}^*=\alpha^{(j)}(a_j)$, so the series $a=\sum_{j=1}^\infty a_j$ converges in norm,
and $a=z^*+z$, where $z=\alpha^{-1}(\zt)$.
\end{proof}

\begin{lemma}\label{lem:Dapprox}
Suppose $D$ is the Cartan masa of the hyperfinite II$_1$--factor $R$ and $a=a^*\in D$ has trace zero.
Let $n_1\in\Nats$, $n_2=n_3=\cdots=2$, and suppose
there exists an increasing family
\[
D^{(1)}\subseteq D^{(2)}\subseteq D^{(3)}\subseteq\cdots
\]
of abelian, unital $*$--subalgebras of $D$ whose union is weakly dense in $D$, and where each $D^{(j)}$ has
dimension $n_1n_2\cdots n_j$ and has minimal projections equally weighted by the trace.
Letting $E_j:D\to D^{(j)}$ denote the trace--preserving conditional expectation, (with $E_0$ being simply the trace),
suppose
\begin{equation}\label{eq:anjsum}
S:=\sum_{j=1}^\infty\|E_j(a)-E_{j-1}(a)\|<\infty.
\end{equation}
Then there is an automorphism $\sigma$ of $D$ and quasinilpotent element $z\in R$ so that $z^*+z=\sigma(a)$ and $\|z\|\le CS$,
where the constant $C$ is from Proposition~\ref{prop:Dm2}.
\end{lemma}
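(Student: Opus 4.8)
The plan is to expand $a$ as a norm--convergent series $a=\sum_{j=1}^\infty a_j$, where $a_j:=E_j(a)-E_{j-1}(a)$, to reorder the diagonal entries level by level so that each summand, suitably permuted, has $T$--matrix of norm at most $C\|a_j\|$, and then to invoke Lemma~\ref{lem:Tasum}. Since $\sum_j\|a_j\|=S<\infty$ the series converges in norm, and because $E_j(a)\to a$ in $\|\cdot\|_2$ its sum is $a$; note that $a_j=a_j^*$ lies in $D^{(j)}$, is orthogonal to $D^{(j-1)}$, and satisfies $\tau(a_j)=0$. First I would reduce to the standard picture: by a routine back--and--forth (exhaustion) argument for increasing chains of finite--dimensional abelian subalgebras with prescribed dimensions and equally weighted minimal projections, there is a trace--preserving automorphism $\sigma_0$ of $D$ carrying the chain $(D^{(j)})_j$ onto the standard chain $(D_{n_1\cdots n_j})_j$ of~\eqref{eq:DinRindlim} and carrying each $E_j$ to the associated trace--preserving conditional expectation. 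Since $\sigma_0$ is isometric and intertwines the conditional expectations, it suffices to treat $\sigma_0(a)$ in the standard picture; I will absorb $\sigma_0$ into the final $\sigma$ and henceforth write $D^{(j)}=D_{n_1\cdots n_j}$.

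Next I would build the permutation automorphism one level at a time. For $j=1$ (assuming $n_1\ge2$; if $n_1=1$ then $a_1=0$ and one begins at $j=2$ with Theorem~\ref{thm:partial}), the element $a_1=E_1(a)\in D_{n_1}$ has trace $0$, so Theorem~\ref{thm:partial} gives a permutation $\tau_1\in S_{n_1}$ with $\|T_{a_1\circ\tau_1}\|\le K\|a_1\|\le C\|a_1\|$; as $\tau_1$ merely permutes whole blocks of minimal projections of $D$, it extends to an automorphism $\widehat\tau_1$ of $D$ preserving every $D_{n_1\cdots n_k}$. For $j\ge2$, suppose $\widehat\tau_1,\widehat\rho_2,\ldots,\widehat\rho_{j-1}$ have been chosen, each preserving every $D_{n_1\cdots n_k}$ and, in the case of $\widehat\rho_i$, fixing $D_{n_1\cdots n_{i-1}}$ pointwise. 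Let $b_j$ be the component $E_j(\cdot)-E_{j-1}(\cdot)$ of $(\widehat\rho_{j-1}\cdots\widehat\tau_1)(a)$; then $b_j=(\widehat\rho_{j-1}\cdots\widehat\tau_1)(a_j)$, so $\|b_j\|=\|a_j\|$, and since $n_j=2$ we have $b_j\in D_{n_1\cdots n_j}\ominus\beta^{(j-1)}(D_{n_1\cdots n_{j-1}})$. Proposition~\ref{prop:Dm2} then supplies a reordering $\kappa_j$ of $b_j$, obtained by permuting the two minimal projections within each $2$--element fiber over $D_{n_1\cdots n_{j-1}}$, with $\|T_{\kappa_j}\|\le C\|b_j\|=C\|a_j\|$. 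Let $\widehat\rho_j\in\Aut(D)$ be the automorphism effecting $b_j\mapsto\kappa_j$ on $D_{n_1\cdots n_j}$, extended so as to preserve every $D_{n_1\cdots n_k}$; since its underlying permutation fixes the fibers, $\widehat\rho_j$ fixes $D_{n_1\cdots n_{j-1}}$ pointwise and hence leaves the earlier components unchanged.

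Because $\widehat\rho_j$ fixes $D_{n_1\cdots n_{j-1}}$ pointwise, the composites $\widehat\rho_j\cdots\widehat\tau_1$ are eventually constant on each $D_{n_1\cdots n_k}$; they therefore converge on $\bigcup_k D_{n_1\cdots n_k}$ and extend to a trace--preserving automorphism, which composed with $\sigma_0$ I call $\sigma$. Writing $\widetilde a:=\sigma(a)$ and $\widetilde a_j:=E_j(\widetilde a)-E_{j-1}(\widetilde a)$, the fact that $\sigma$ commutes with each $E_k$ gives $\widetilde a_j=\sigma(a_j)=\kappa_j$ (the automorphisms $\widehat\rho_k$ with $k>j$ fix $D_{n_1\cdots n_j}$ pointwise), so $\widetilde a_j=\widetilde a_j^*\in D_{n_1\cdots n_j}$, $\tau(\widetilde a_j)=0$, and $\|T_{\widetilde a_j}\|=\|T_{\kappa_j}\|\le C\|a_j\|=C\|E_j(a)-E_{j-1}(a)\|$. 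Hence $\sum_j\|T_{\widetilde a_j}\|\le CS<\infty$, and Lemma~\ref{lem:Tasum} produces a quasinilpotent $z\in R$ with $z^*+z=\sum_j\widetilde a_j=\widetilde a=\sigma(a)$ and $\|z\|\le\sum_j\|T_{\widetilde a_j}\|\le CS$, as required.

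The step I expect to be the main obstacle is the bookkeeping that lets the level--by--level reorderings assemble into a single automorphism of $D$ without retroactively spoiling earlier estimates: one must apply Proposition~\ref{prop:Dm2} at stage $j$ to the component $b_j$ of the already partially reordered element rather than to a component of $a$ itself, note that the passage $a_j\mapsto b_j$ is isometric so the bound remains $C\|a_j\|$, and verify that each new reordering, fixing $D_{n_1\cdots n_{j-1}}$ pointwise, leaves all earlier levels intact. This last point is exactly why Proposition~\ref{prop:Dm2} was formulated with reorderings that fix $\beta_{m,2}(D_m)$, and it is what simultaneously guarantees the convergence of the product $\widehat\rho_j\cdots\widehat\tau_1$ and the stability of the estimates under it.
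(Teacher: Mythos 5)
Your proof follows essentially the same route as the paper's: decompose $a$ into martingale differences $a_j=E_j(a)-E_{j-1}(a)$, reorder the diagonal entries level by level via Proposition~\ref{prop:Dm2}, assemble the level-by-level reorderings into an automorphism $\sigma$ of $D$, and then invoke Lemma~\ref{lem:Tasum}. Where you are more careful than the paper's terse argument is in the bookkeeping of the assembly step. The paper writes ``by Proposition~\ref{prop:Dm2} there is a trace-preserving automorphism $\sigma_j$ of $D^{(j)}$ fixing $D^{(j-1)}$ such that $\|T_{\sigma_j(a_j)}\|\le C\|a_j\|$,'' and then says the inductive limit of the $\sigma_j$ is $\sigma$. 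But if one forms $\sigma$ by composing extensions $\widehat\sigma_j$ of $\sigma_j$ to $D$, then $\sigma(a_j)=\widehat\sigma_j\cdots\widehat\sigma_1(a_j)$, which is not $\sigma_j(a_j)$; so strictly one must apply Proposition~\ref{prop:Dm2} at stage $j$ to the partially reordered element $b_j=\widehat\rho_{j-1}\cdots\widehat\tau_1(a_j)$, as you do, so that the limit automorphism satisfies $\sigma(a_j)=\kappa_j$ with the advertised bound. Since $\|b_j\|=\|a_j\|$, the numerical estimate is unchanged, so the paper's conclusion is correct, but your version makes the compatibility precise. You also correctly note that at the base level $j=1$ (where $a_1=E_1(a)$ is a general trace-zero element of $D^{(1)}$, not in the image of some $\beta$) one should invoke Theorem~\ref{thm:partial} rather than Proposition~\ref{prop:Dm2}, using $K\le C$; the paper leaves this case implicit, signaled only by the parenthetical ``(if $j\ge2$).''
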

\begin{proof}
We may
write $D$ as an inductive limit as in the bottom row of~\eqref{eq:DinRindlim}, where we now think of $D^{(j)}$
as the set of diagonal matrices in $M_{2^j}$ and the inclusion $D^{(j)}\subseteq D^{(j+1)}$ given by
the map $\beta$ as in~\eqref{eq:betaeii}, with $m=n_1 2^{j-1}$ and $n=2$.
Then using that $D$ is a Cartan masa in $R$,
the inclusion $D\hookrightarrow R$ may be written as an inductive limit as in~\eqref{eq:DinRindlim}.
Let $a_j=E_j(a)-E_{j-1}(a)$.
Note that $E_0(a)=0$ and we have $a=\sum_{j=1}^\infty a_j$, with the estimate~\eqref{eq:anjsum} ensuring convergence in norm.
By Proposition~\ref{prop:Dm2}, for each $j$
there is a trace--preserving automorphism $\sigma_j$ of $D^{(j)}$ fixing each element of $D^{(j-1)}$
(if $j\ge2$) and such that
\[
\|T_{\sigma_j(a_j)}\|\le C\|a_j\|.
\]
The inductive limit of these automorphisms $\sigma_j$ is an automorphism $\sigma$ of $D$, and we have
\[
\sigma(a)=\sum_{j=1}^\infty\sigma_j(a_j).
\]
Now by Lemma~\ref{lem:Tasum}, there is a quasinilpotent element $z\in R$ such that $z+z^*=\sigma(a)$
and $\|z\|\le CS$.
\end{proof}

We now provide examples of the elements $a$ satisfying hypotheses of Lemma~\ref{lem:Dapprox}.

\begin{prop}\label{prop:aUHF}
 Let $D$ be the Cartan masa of the hyperfinite II$_1$--factor $R$, and let $a=a^*$ be an element in $D$ with $\tau(a)=0$.
Suppose, in addition, that  the distribution $\mu_a$ of $a$ satisfies:
\begin{enumerate}[(i)]
\item $\mu_a$ has at most a finite number of atoms, each of rational weight,
\item the nonatomic part of $\mu_a$ is either zero or has support equal to the union of finitely many pairwise disjoint
closed intervals $I_j$,
\item $\mu(I_j)$ is rational for each $j$,
\item the restriction of the nonatomic part of $\mu_a$ to each of the intervals $I_j$ is Lebesgue absolutely continuous and has
Radon--Nikodym derivative with respect to Lebesgue measure that is bounded below on $I_j$ by some $\delta>0$.
\end{enumerate}
Then the element $a$ satisfies the hypothesis of
Lemma~\ref{lem:Dapprox}
and, 
consequently, $\sigma(a)$ is the real part of a quasinilpotent operator in $R$, for some automorphism $\sigma$ of $D$.
\end{prop}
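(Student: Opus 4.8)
The plan is to verify the hypotheses of Lemma~\ref{lem:Dapprox} for the given element $a$, i.e., to produce the nested sequence of equally-weighted subalgebras $D^{(j)}$ with $\sum_j\|E_j(a)-E_{j-1}(a)\|<\infty$. First I would reduce to the case where $\mu_a$ is the sum of its atomic and nonatomic parts handled together by choosing $n_1$ to be a common denominator: since all the atom weights and all the $\mu_a(I_j)$ are rational, there is an integer $N$ so that $N$ times each of these numbers is a (positive) integer. I would then model $D$ concretely as $L^\infty[0,1)$ (Lebesgue) together with the partition of $[0,1)$ into subintervals, of lengths equal to the atom weights and the interval-masses $\mu_a(I_j)$, and take $a$ to be the function that is constant (equal to the atom location) on each ``atom'' subinterval and, on each ``continuous'' subinterval, is the monotone rearrangement of the restriction of $\mu_a$ to $I_j$. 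The dyadic-type filtration $D^{(j)}$ is then the algebra of functions constant on each of the $N2^{j-1}$ dyadic subintervals obtained by bisecting $j-1$ times (after the initial split into $N$ pieces), so $\dim D^{(j)}=n_1 n_2\cdots n_j$ with $n_1=N$, $n_2=n_3=\cdots=2$, and all minimal projections have equal trace $1/(N2^{j-1})$. The union is weakly dense because dyadic step functions are dense in $L^\infty$ norm-approximation of continuous functions and weak-$*$ dense in $L^\infty$.

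Next I would estimate $\|E_j(a)-E_{j-1}(a)\|$. On the atom subintervals, $a$ is already constant, so $E_j(a)=E_{j-1}(a)$ there and these contribute nothing beyond the first step. On a continuous subinterval corresponding to $I_j$, $a$ restricted there is monotone, and the hypothesis that the Radon--Nikodym derivative $d\mu_a/dx$ is bounded below by $\delta>0$ on $I_j$ translates, under the monotone rearrangement onto a subinterval of $[0,1)$ of length $\mu_a(I_j)$, into the rearranged function being Lipschitz: if $F$ is the (increasing) distribution function then $F'\ge\delta$ on $I_j$, so the inverse $F^{-1}$ has derivative $\le 1/\delta$, i.e. the rearranged $a$ is Lipschitz with constant $1/\delta$ (after rescaling by the length factor, still a fixed constant $L$ depending only on $\delta$ and the $\mu_a(I_j)$). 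For a Lipschitz function with constant $L$ on $[0,1)$, the martingale differences of the dyadic filtration satisfy $\|E_j(a)-E_{j-1}(a)\|_\infty\le L\cdot 2^{-(j-1)}$ (the oscillation of a Lipschitz function over a dyadic interval of length $\sim 2^{-j}$ is $\lesssim L2^{-j}$, and conditional expectation differences are controlled by oscillations on the parent interval). Hence $\sum_j\|E_j(a)-E_{j-1}(a)\|\le\|a\|+\sum_j L2^{-(j-1)}<\infty$, so $S<\infty$.

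With the hypotheses of Lemma~\ref{lem:Dapprox} verified, that lemma immediately yields an automorphism $\sigma$ of $D$ and a quasinilpotent $z\in R$ with $z+z^*=\sigma(a)$ and $\|z\|\le CS$, which is exactly the conclusion. The one subtlety I anticipate as the main obstacle is the translation of hypothesis~(iv) into a genuine Lipschitz (equivalently, uniformly bounded martingale-difference) bound for the correctly-arranged $a$: one must be careful that the arrangement of $a$ as a diagonal element of $D$ is chosen to be the monotone one on each interval piece (so that the ``bounded density below'' hypothesis does its job), and that the partition points at the first level exactly separate the atom pieces from the continuous pieces so that no dyadic cell straddles a jump of $a$ for $j\ge 1$; a common-denominator choice of $n_1$ makes this bookkeeping work. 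The rest is the routine dyadic-martingale estimate for Lipschitz functions plus a direct appeal to the preceding lemmas.
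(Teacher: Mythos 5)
Your proposal is correct and follows essentially the same path as the paper's proof: choose $n_1$ as a common denominator so the first-level subalgebra $D^{(1)}$ separates the atomic pieces from the continuous pieces, then on each continuous piece use the lower bound $\delta$ on the density to get geometric decay of the approximation error, and feed the summable sequence into Lemma~\ref{lem:Dapprox}. The only cosmetic difference is that the paper phrases the estimate in the spectral picture — partitioning the support $[c,d]$ into $2^j$ pieces of equal $\mu_b$-measure and observing $c^{(j)}_k-c^{(j)}_{k-1}<2^{-j}/\delta$ — while you phrase it in the ``diagonal'' picture as a Lipschitz/dyadic-martingale bound for the monotone rearrangement; these are the same estimate under the change of variables $F^{-1}$, and your observation that $(F^{-1})'\le1/\delta$ is exactly the paper's inequality on the spacing of the $c^{(j)}_k$.
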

\begin{proof}
Let $n_1$ be an integer large enough so that (a) $n_1$ times the weight of every atom of $\mu_a$ is an integer and (b)
$n_1$ times each $\mu_a(I_j)$ is an integer.
Then we may choose an $n_1$ dimensional subalgebra $D^{(1)}$ of $D$ with minimal projections $p_1,\ldots,p_{n_1}$ that
are equally weighted
by the trace, and such that each $p_ja$ is either a scalar multiple of $p_j$ or an element whose distribution is
Lebesgue absolutely continuous on an interval with Radon--Nikodym derivative that is bounded below by $\delta$
on its support.

Now it suffices to consider a single element $b\in D$ whose distribution $\mu_b$ is Lebesgue
absolutly continuous, is supported on a closed interval $[c,d]$,
with $c<d$, and having Radon--Nikodym derivative with respect to Lebesgue measure that is bounded below by $\delta>0$.
It will suffice to find an increasing chain of
subalgebras $D^{(j)}$ of dimension $2^j$
and with all minimal projections having trace $2^j$, such that $\sum_{j=1}^\infty\|b-E_j(b)\|<\infty$,
where $E_j$ is the conditional expectation onto $D^{(j)}$.
This is easily done.
Indeed we have the partition
$c=c^{(j)}_0<c^{(j)}_1<\cdots<c^{(j)}_{2^j}=d$ of $[c,d]$ so that $\mu_b([c^{(j)}_{k-1},c^{(j)}_k])=2^{-j}$ for all $k$.
As the Radon--Nikodym derivative is bounded below by $\delta$, we have $c^{(j)}_k-c^{(j)}_{k-1}<2^{-j}/\delta$ for all $k$.
Then letting $D^{(j)}$ be the subalgebra of $D$ spanned by the spectral projections of $b$ corresponding to the
intervals $[c^{(j)}_{k-1},c^{(j)}_k]$, we have $\|b-E_j(b)\|\le2^{-j}/\delta$ and $D^{(j)}\subseteq D^{(j+1)}$.
\end{proof}

The techniques we have employed suggest the following question:
\begin{ques}
If $a$ is a self--adjoint element in the UHF algebra $M_{2^\infty}$ whose trace is zero, 
is $a$ the real part of a quasi--nilpotent operator?
\end{ques}

However, the key point for the previous proposition was to arrange that the series in~\eqref{eq:anjsum} be summable.
We do not see how to make this so for an abitrary element of the diagonal of the UHF algebra $M_{2^\infty}$ embedded in $R$.
The following example illustrates the difficulty.
\begin{example}
Let the Cartan masa $D$ be identified with $L^\infty[-\frac12,\frac12]$ with the trace given by Lebesgue measure.
Let $a\in D$ be the increasing function whose distribution $\mu_a$ is
\[
\mu_a=\sum_{n=2}^\infty\frac1{2^n}\big(\delta_{-\frac1n}+\delta_{\frac1n}\big).
\]
Thus, we have
\[
a(t)=\begin{cases}
-\frac1n,&-2^{-(n-1)}<t<-2^{-n},\quad n\ge2 \\
\frac1n,&2^{-n}<t<2^{-(n-1)},\quad n\ge2.
\end{cases}
\]
Let $D^{(n)}$ be the subalgebra of $D$ that is spanned by the characteristic functions of the intervals
\[
\big(-\frac12+\frac{k-1}{2^n},-\frac12+\frac k{2^n}\big),\qquad (1\le k\le 2^n)
\]
and let $E_n$ denote the conditional expectation of $D$ onto $D^{(n)}$.
Let $s_N=\sum_{n=N+1}^\infty\frac1{n2^n}$.
Then we have
\[
E_N(a)(t)=\begin{cases}
-\frac1n,&-2^{-(n-1)}<t<-2^{-n},\quad 2\le n\le N \\
-s_N,&-2^{-N}<t<0 \\
s_N,&0<t<2^{-N} \\
\frac1n,&2^{-n}<t<2^{-(n-1)},\quad 2\le n\le N.
\end{cases}
\]
and from this we compute
$\|E_{N+1}(a)-E_N(a)\|=\max(s_N-s_{N+1},|s_N-\frac1{N+1}|)$
for each $N\ge1$.
Since $0<s_N<2^{-N-1}$, we have
\begin{equation}\label{eq:suminfinite}
\sum_{N=1}^\infty\|E_{N+1}(a)-E_N(a)\|=\infty.
\end{equation}
\end{example}

While the above example does not prove that no choice of subalgebras $D^{(n)}$ can be made
which renders finite the corresponding sum~\eqref{eq:suminfinite},
we do not see a choice that would do so.

\medskip
The next proposition employs the usual techniques to give more examples in ultrapower II$_1$--factors.

\begin{prop}
Let $R^\omega$ be an ultrapower of the hyperfinite II$_1$ factor, for $\omega$ a non--principle ultrafilter on $\Nats$.
Let $a=a^*\in R^\omega$ have trace zero and suppose the distribution $\mu_a$ of $a$ satisfies the hypotheses of Proposition~\ref{prop:aUHF}.
Then there is a quasinilpotent element $z\in R^\omega$ such that $a=z+z^*$.
\end{prop}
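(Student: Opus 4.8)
The plan is to imitate the two preceding ultrapower arguments: build the desired quasinilpotent inside the hyperfinite factor $R$ using Proposition~\ref{prop:aUHF}, push it into $R^\omega$ via the canonical unital inclusion $R\subseteq R^\omega$, and then conjugate by a unitary of $R^\omega$ to land on the given $a$.

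First I would produce a representative of the prescribed distribution inside $R$. Since $\tau(a)=0$, the distribution $\mu_a$ is a compactly supported Borel probability measure on $\Reals$ with barycenter $0$; choosing a realization of the Cartan masa $D\subseteq R$ as an $L^\infty$ of a probability space, there is a (monotone) self--adjoint element $b\in D$ with $\mu_b=\mu_a$ and hence $\tau(b)=0$. Because conditions (i)--(iv) of Proposition~\ref{prop:aUHF} are conditions on the distribution only, they hold for $b$. Applying Proposition~\ref{prop:aUHF} to $b$ yields an automorphism $\sigma$ of $D$ and a quasinilpotent element $y\in R$ with $y+y^*=\sigma(b)$. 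Since $\sigma$ is trace preserving, $\sigma(b)$ has the same distribution as $b$, so $\mu_{y+y^*}=\mu_a$.

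Finally, regard $y$ as an element of $R^\omega$ through the constant--sequence embedding $R\subseteq R^\omega$; it is still quasinilpotent there, and $y+y^*$ has distribution $\mu_a$. Now $a$ and $y+y^*$ are self--adjoint elements of $R^\omega$ with the same distribution, hence are unitarily equivalent in $R^\omega$ — the same homogeneity fact already used in the previous proposition — so $u(y+y^*)u^*=a$ for some unitary $u\in R^\omega$, and $z:=uyu^*$ is a quasinilpotent element of $R^\omega$ with $z+z^*=a$. Every step reduces either to Proposition~\ref{prop:aUHF} or to the standard fact that self--adjoint elements of $R^\omega$ with equal distribution are unitarily conjugate, so there is no real obstacle; the only point requiring (trivial) care is that passing to the representative $b\in D$ and then to $\sigma(b)$ does not change the distribution, which is immediate from trace invariance.
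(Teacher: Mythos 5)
Your proof is correct and follows essentially the same route as the paper's: apply Proposition~\ref{prop:aUHF} inside $R$ to a representative of the distribution $\mu_a$, embed the resulting quasinilpotent into $R^\omega$ via the canonical inclusion $R\subseteq R^\omega$, and then conjugate by a unitary of $R^\omega$ using that self-adjoint elements of $R^\omega$ with equal distributions are unitarily equivalent. You are slightly more explicit than the paper about first choosing $b\in D$ with $\mu_b=\mu_a$ and about $\sigma$ being trace-preserving, but these are the same steps the paper's proof relies on implicitly.
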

\begin{proof}
By Proposition~\ref{prop:aUHF}, there is a quasinilpotent element $\zt\in R$ such that the distribution of $\at:=\zt+\zt^*$ equals $\mu_a$.
Thus, the element $b$ of $R^\omega$ which is the class of the sequence of $\at$ repeated infinitely often is (a) equal to $y+y^*$
for a quasinilpotent element $y$ of $R^\omega$ and (b) has distribution equal to $\mu_a$.
Since all the self--adjoint elements in $R^\omega$ having a given distribution are unitarily equivalent, we find the desired element $z$
as a unitary conjugate of $y$.
\end{proof}

Finally, here is a specific question 

\begin{ques}\label{ques:irrat}
Let $p$ be a projection in the hyperfinite II$_1$--factor or, for that matter, in any specific II$_1$--factor, whose trace $\tau(p)$ is irrational.
Is $p-\tau(p)1$ the real part of a quasinilpotent element of the II$_1$--factor?
\end{ques}
Of course, with $\tau(p)$ rational, the element $p-\tau(p)1$ is the real part of a nilpotent in an
embedded matrix algebra.
However, with $\tau(p)$ irrational, the techniques used in this paper do not apply to
the element $p-\tau(p)1$,
as it does not have the same distribution as any element
with vanishing center--valued trace in a finite type~I von Neumann algebra,
nor does it fall under the rubric of results in this section.

\begin{bibdiv}
\begin{biblist}

\bib{ACN92}{article}{
   author={Angelos, James R.},
   author={Cowen, Carl C.},
   author={Narayan, Sivaram K.},
   title={Triangular truncation and finding the norm of a Hadamard multiplier},
   journal={Linear Algebra Appl.},
   volume={170},
   date={1992},
   pages={117--135},
}

\bib{D88}{book}{
   author={Davidson, Kenneth R.},
   title={Nest algebras},
   series={Pitman Research Notes in Mathematics Series},
   volume={191},
   note={Triangular forms for operator algebras on Hilbert space},
   publisher={Longman Scientific \& Technical},
   place={Harlow},
   date={1988},
}

\bib{DH04}{article}{
  author={Dykema, Ken},
  author={Haagerup, Uffe},
  title={Invariant subspaces of the quasinilpotent DT--operator},
  journal={J. Funct. Anal.},
  volume={209},
  year={2004},
  pages={332-–366}
}

\bib{FFS79}{article}{
  author={Fillmore, Peter},
  author={Fong, C.K.},
  author={Sourour, A.R.},
  title={Real parts of quasi--nilpotent operators},
  journal={Proc. Edinburgh Math. Soc.},
  volume={22},
  year={1979},
  pages={263--269}
}

\bib{HS09}{article}{
  author={Haagerup, Uffe},
  author={Schultz, Hanne},
  title={Invariant subspaces for operators in a general II$_1$--factor},
  journal={Publ. Math. Inst. Hautes \'Etudes Sci.},
  year={2009},
  pages={19-111}
}

\bib{KP70}{article}{
   author={Kwapie{\'n}, S.},
   author={Pe{\l}czy{\'n}ski, A.},
   title={The main triangle projection in matrix spaces and its
   applications. },
   journal={Studia Math.},
   volume={34},
   date={1970},
   pages={43--68},
}

\bib{MW79}{article}{
  author={Murphy, Gerard J.},
  author={West, T. T.},
  title={Spectral radius formulae},
  journal={Proc. Edinburgh Math. Soc.},
  volume={22},
  year={1979},
  pages={271--275}
}

\bib{T08}{article}{
  author={Tucci, Gabriel},
  title={Some quasinilpotent generators of the hyperfinite II$_1$ factor},
  journal={J. Funct. Anal.},
  volume={254},
  year={2008},
  pages={2969-–2994}
}

\end{biblist}
\end{bibdiv}

\end{document}